\documentclass[12pt]{wart}
\usepackage{xspace,amssymb,amsfonts,euscript,eufrak,amsthm,amsmath}
\usepackage{graphicx,ifpdf}
\ifpdf \usepackage{epstopdf}
 \usepackage{pdfsync}\fi
\usepackage{palatino}

 \title[Standard objects in 2-braid groups]
 {Standard objects in 2-braid groups}
\author{\textsc{Nicolas Libedinsky}&\textsc{Geordie Williamson}}
\address{Departamento de Matem\'eticas & Max-Planck-Institut\\
        Universidad de Chile, Casilla 653,  & f\"ur Mathematik,\\
Las Palmeras 3425, Nu\~noa, & Vivatsgasse 7, \\
Santiago, Chile.& Bonn, Germany.}
\email{nlibedinsky@gmail.com}
\email{geordie@mpim-bonn.mpg.de}

\subjclass{Primary 14M15; Secondary 20G05, 16S37.}

\input xy
\xyoption {all}

  \newcommand{\nc}{\newcommand}
  \newcommand{\renc}{\renewcommand} 

\usepackage[latin1]{inputenc}

\nc{\E}{\mathbb{E}}

\nc{\Q}{\mathbb{Q}}

\nc{\triright}{\stackrel{[1]}{\to}}

\def\to{\rightarrow}

\def\longto{\longrightarrow}

\def\onto{\twoheadrightarrow}

\nc{\Br}{\mathcal{B}}
\nc{\id}{id}
\nc{\HotRR}{{}_R\mathcal{K}_R}
\nc{\HotR}{\mathcal{K}_R}
\nc{\excise}[1]{}
\nc{\defect}{\text{df}}
\nc{\h}[1]{\underline{H}_{#1}}
\nc{\Z}{\mathbb{Z}}
\nc{\R}{\mathbb{R}}
\nc{\C}{\mathbb{C}}
\renc{\P}{\mathbb{P}}
\renc{\O}{\mathcal{O}}
\nc{\N}{\mathbb{N}}

\nc{\F}{\mathcal{F}}
\nc{\G}{\mathcal{G}}

\nc{\nilp}{\mathcal{N}}

\nc{\Ga}{\mathbb{G}_a} 
\nc{\Gm}{\mathbb{G}_m} 

\nc{\Loc}{\mathcal{L}}

\nc{\A}{\mathbb{A}} 

\nc{\IC}{\mathbf{IC}}
\nc{\D}{\mathbb{D}}


\DeclareMathOperator{\Hom}{Hom}
\DeclareMathOperator{\supp}{supp}

\DeclareMathOperator{\End}{End}

  \newtheorem{defi}{Definition}
  \newtheorem{thm}{Theorem}[section]
  \newtheorem{lem}[thm]{Lemma}
  
  \newtheorem{prop}[thm]{Proposition}
  \newtheorem{cor}[thm]{Corollary}

  \newtheorem{ex}[thm]{Example}
  \theoremstyle{remark}
  \newtheorem{remark}{Remark}

\DeclareMathOperator{\cone}{cone}

\DeclareMathOperator{\Ext}{Ext}

\newcommand{\Id}{\mathrm{Id}}

\newcommand{\CO}{\mathcal{O}}

\newcommand{\Mod}[1]{{#1}\textrm{-Mod}}
\newcommand{\BMod}[2]{{#1}\textrm{-Mod-}#2}
\newcommand{\Gr}{\textrm{Gr}}

  \date{\dateline{Jan 4, 2007}{Feb 10, 2007}\\
   \small Mathematics Subject Classification: 05C88}

\begin{document}
\begin{abstract}
For any Coxeter system we  establish the existence (conjectured by
Rouquier) of analogues of standard and costandard objects in 2-braid
groups. This generalizes a known extension vanishing formula in the
BGG category $\mathcal{O}.$ 
\end{abstract}
\maketitle

\let\thefootnote\relax\footnote{
\emph{2010 Mathematics Subject Classification.} Primary 20F36; Secondary 20G05, 17B10.
}

\section{Introduction}

In \cite{Ro1} Rouquier introduces a categorification of (a quotient
of) the Artin braid group associated to a Coxeter system. He calls the
resulting monoidal category the 2-braid group. It occurs in
the study of categories of representations of semi-simple Lie
algebras, affine Lie algebras, reductive algebraic groups,
quantum groups etc. On the other hand, it has been
used to categorify the HOMFLYPT polynomial of a link (see
\cite{Kh}).

Let us be more precise. Let $(W,\mathcal{S})$ be a Coxeter system and
$V$  the geometric representation of $W$ over the complex numbers.
Let $R$ be the regular functions on $V$, graded such that
$\mathrm{deg}V^*=2$. The group $W$ acts on $V$, so by functoriality it
acts on $R$. For $s\in \mathcal{S}$ let $R^s$ be the subspace of $R$
of $s$-fixed points. Consider the following complexes of graded
$R$-bimodules: 
\[
\begin{array}{lrrl}
F_s & = & \dots \to 0\rightarrow & R\otimes_{R^s}R(1) \rightarrow R(1)
\rightarrow 0 \to \dots \\
F_{s^{-1}}& = & \quad \dots \to 0\rightarrow R(-1)\rightarrow &
R\otimes_{R^s}R(1)\rightarrow 0 \to \dots .
\end{array}
\]
Here $(1)$ denotes the grading shift functor (normalised so that
$R(1)$ is generated in degree -1). In both cases $R\otimes_{R^s}R(1)$
is the degree zero term of the complex and the non-trivial
differentials of $F_s$ and $F_{s^{-1}}$ are the unique non-zero maps of
degree zero (which are well-defined up to a scalar). 

Let $K^b(\BMod{R}{R})$ denote the homotopy category of bounded
complexes of graded $R$-bimodules, which is a monoidal category under
tensor product of complexes. Let $B_W$ be the Artin braid group
associated to the Coxeter system $(W,\mathcal{S})$. Given any word $\sigma$
in $\mathcal{S}$ and $\mathcal{S}^{-1}$ one can
consider the corresponding product of the complexes $F_s$ and
$F_{s^{-1}}$ above. In \cite{Ro1} Rouquier shows that the
corresponding complex $F_{\sigma}$  in the homotopy category only depends up to
isomorphism on the image of $\sigma$ in the Braid group $B_W$. One obtains
in this way a ``weak categorification'': a homomorphism from $B_W$ to
the set of isomorphisms classes of complexes in $K$ (which has the
structure of a monoid induced from the monoidal structure on $K$).

Moreover, Rouquier \cite{Ro1} shows that any two expressions for $\sigma \in B_W$
give rise to canonically isomorphic complexes. Hence one obtains a
``strict categorification''. That is, one has a monoidal functor
\[
F : \Omega B_W \to K^b(\BMod{R}{R})
\]
where $\Omega B_W$ is the monoidal category associated to $B_W$: the objects of
$\Omega B_W$ are the elements $\sigma \in B_W$, the morphisms
are given by $\Hom(\sigma, \sigma') = \emptyset$ if $\sigma \ne \sigma'$ and
$\End(\sigma) = \{ \id \}$, and the monoidal structure is given by the
group structure on $B_W$. 

Let $\mathfrak{B}_W$ denote the full subcategory of $K^b(\BMod{R}{R})$
consisting of all objects isomorphic to objects in the image of
$F$. Rouquier calls $\mathfrak{B}_W$ the \emph{2-braid group} of the
Coxeter system $(W,\mathcal{S})$. (Strictly speaking, what Rouquier
  calls the 2-braid group is a strict monoidal category monoidally
  equivalent to $\mathfrak{B}_W$.)
As we have seen, the decategorification of
$\mathfrak{B}_W$ is a quotient of $B_W$. He conjectures that the
decategorification is exactly $B_W$. This conjecture is true for the
topological braid group (that is when $W$ is the symmetric
group) by work of Khovanov and Seidel \cite{KhSe}.

As we have seen, when viewed as a monoidal category the morphism
spaces in $\Omega B_W$ are boring; all information is already
contained in the structure of $B_W$ as a group. This is far from true
for $\mathfrak{B}_W$. There is a rich and at present poorly understood
structure in the morphism spaces of $\mathfrak{B}_W$.

The reader seeking an analogy might like to think about the braid
group of type $A_n$, where one can view braids topologically. In this
case one has a natural categorification, the category of ``braid cobordisms'':
objects are topological braids and morphisms are 
certain cobordisms (see \cite{CS}  or the introduction of \cite{KT}). In this case
there is a monoidal functor from the category of braid cobordisms to
the 2-braid group (see \cite{KT} and \cite{EK}). Unfortunately, certain generating
cobordisms (either the birth or death of a single crossing) are
necessarily mapped to zero. So the (topological) category of braid cobordisms and
the (algebraic) 2-braid group seem to be quite different.

This paper can be seen as a first attempt to understand the
homomorphisms in $\mathfrak{B}_W$. More precisely, we explain how the canonical
section of the projection 
\[
B_W \onto W
\]
allows one to define ``standard'' and ``costandard'' objects in
$\mathfrak{B}_W$. Given $w \in W$, let $\sigma \in B_W^+$ denote the
canonical positive lift of $w$ in $B_W$ and define $F_w =
F_\sigma$. Our main theorem is the following:

\begin{thm}\label{A}\label{Princ} For $x, y \in W$ we have
\begin{equation*}
\mathrm{Hom}(F_{x},F_{y^{-1}}^{-1}[i])\cong
\begin{cases}
R& \text{if } x=y \text{ and } i=0,\\
0& \text{otherwise.}
\end{cases}
\end{equation*}
\end{thm}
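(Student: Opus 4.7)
The plan is to induct on $\ell(x)+\ell(y)$. In the base case $x=y=e$, both $F_x$ and $F_{y^{-1}}^{-1}$ coincide with the unit $R$ and a direct computation in $K^b(\BMod{R}{R})$ yields $\Hom(R,R[i])\cong R\cdot\delta_{i,0}$, as required.

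The principal tool is the adjunction flowing from Rouquier's theorem that $F_s\otimes F_{s^{-1}}\cong R$, making each of $F_s,F_{s^{-1}}$ both a left and a right adjoint of the other. One thus obtains the natural isomorphism $\Hom(F_sA,B)\cong\Hom(A,F_{s^{-1}}B)$ and its right-handed analogue. For the inductive step, assume $\ell(x)+\ell(y)\ge1$ and, after possibly exchanging the roles of $x$ and $y$, take $x\ne e$. Fix a left descent $s$ of $x$ and write $F_x=F_sF_{sx}$. If $s$ is not a left descent of $y$ (so $\ell(sy)=\ell(y)+1$), then the positive lift of $(sy)^{-1}=y^{-1}s$ factors as the product of the lifts of $y^{-1}$ and $s$, giving $F_{(sy)^{-1}}^{-1}=F_{s^{-1}}F_{y^{-1}}^{-1}$. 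The adjunction then yields
\[
\Hom(F_x,F_{y^{-1}}^{-1}[i])\;=\;\Hom(F_{sx},F_{(sy)^{-1}}^{-1}[i]).
\]
Since $\ell(sx)<\ell(x)$ and $sx=sy$ if and only if $x=y$, a lexicographic induction on $(\ell(x),\ell(y))$ closes this case; a fully symmetric right-descent reduction handles the mirror situation.

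The remaining case, in which every left descent (and every right descent) of $x$ is also a descent of $y$ on the same side, is the delicate one: it includes the diagonal $x=y$ but in general may also include off-diagonal pairs. To handle it I would exploit the stupid-filtration distinguished triangle
\[
R(1)[-1]\;\to\;F_s\;\to\;B_s(1)\;\triright
\]
where $B_s:=R\otimes_{R^s}R$, together with its analogue for $F_{s^{-1}}$. Tensoring with $F_{sx}$ produces a triangle relating $F_x$ to $F_{sx}(1)[-1]$ and $B_sF_{sx}(1)$; applying $\Hom(-,F_{y^{-1}}^{-1}[i])$ yields a long exact sequence whose $F_{sx}$-term is controlled by induction, and whose $B_sF_{sx}$-term, via the biadjunction of $B_s$, becomes a Hom into $B_sF_{y^{-1}}^{-1}$. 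A Soergel-type filtration of $B_sF_{y^{-1}}^{-1}$ by Rouquier complexes of shorter elements should then reduce everything back to the inductive hypothesis.

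The main obstacle will be the bookkeeping in this long exact sequence: one must verify that all contributions cancel for $x\neq y$, while for $x=y$ precisely one copy of $R$ in cohomological degree $0$ survives. This requires careful tracking of internal grading shifts and an explicit understanding of the action of $B_s$ on Rouquier complexes.
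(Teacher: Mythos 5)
Your adjunction reduction in the case where $x$ has a left (or right) descent $s$ that is not a descent of $y$ on the same side is correct and genuinely useful: it reduces $\Hom_K(F_x, F_{y^{-1}}^{-1}[i])$ to $\Hom_K(F_{sx}, F_{(sy)^{-1}}^{-1}[i])$ with $\ell(sx) < \ell(x)$, and a lexicographic induction in $(\ell(x),\ell(y))$ does terminate. (You should, however, justify the claim that one may ``exchange the roles of $x$ and $y$''; the statement is not manifestly symmetric in $x$ and $y$, and one needs to invoke the contravariant duality on $K^b(\mathcal{B})$ interchanging $F_s$ and $F_{s^{-1}}$.) But the argument stalls precisely where the content of the theorem sits: the residual case in which every left and every right descent of $x$ is also a descent of $y$ (e.g.\ $x=s$, $y=sts$). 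Your plan there -- truncate $F_s$ stupidly, tensor with $F_{sx}$, take the long exact sequence, and then ``filter $B_sF_{y^{-1}}^{-1}$ by Rouquier complexes of shorter elements'' -- does not identify a real mechanism. There is no standard filtration of $B_s\,E_y$ by Rouquier complexes; in fact $B_sE_y$ is homotopy equivalent to $B_s(1)E_{sy}$, which is not built from Rouquier complexes in any obvious way. Worse, even granting such a filtration one would still need to control the connecting maps in the long exact sequence, which is exactly the ``bookkeeping'' you flag as the obstacle and leave unresolved. So the proposal has a genuine gap at the delicate step.

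The paper's route is entirely different and bypasses this. It introduces the support filtrations $\Gamma_{\le x}$, $\Gamma_{\ge x}$ and the notion of $\Delta$- and $\nabla$-exact complexes, proves (by an induction on $\ell(x)$ alone, Proposition~\ref{prop:almostsplit}) that the ``augmented'' Rouquier complexes $\widetilde{F_w}=\cone(R_w(-\ell(w))\to F_w)$ and $\widetilde{E_v}=\cone(E_v\to R_v(\ell(v)))$ are $\Delta$- and $\nabla$-exact respectively, and shows via Soergel's Hom formula \eqref{eq:SH} that such complexes are orthogonal to all of $K^b(\mathcal{B})$ (Corollaries~\ref{T} and~\ref{three}). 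Applying $\Hom_K(-,E_v[m])$ and $\Hom_K(F_w,-)$ to the two triangles \eqref{tri1} and \eqref{tri2} then collapses the computation to $\Hom_K(R_w(-\ell(w)),E_v[m])$ and $\Hom_K(F_w,R_v(\ell(v))[m])$, which are computed directly from the support estimate \eqref{BSsupport} and the Hom formula, with no case analysis of descent sets and no long-exact-sequence bookkeeping. If you want to push your inductive strategy through, you would at minimum need a replacement for the $\Delta$-/$\nabla$-exactness input (or Soergel's Hom formula applied degreewise) to force the relevant cancellations; as written the proposal does not supply one.
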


Our theorem was conjectured in Rouquier's ICM address
\cite[4.2.1]{Ro2}. Hence we refer to this equation as Rouquier's
formula. This formula generalizes for all Coxeter groups the important
formula in BGG category $\mathcal{O}$ 
\begin{equation}\label{ext}
\mathrm{Ext}^i(\Delta(w),\nabla(v))=0\ \
  \mathrm{if }\ w\neq v\in W\ \mathrm{or}\ i\neq 0
\end{equation}
where $\Delta(w)$ and $\nabla(v)$ are respectively the standard and
costandard objects in (the principal block of) category $\mathcal{O}.$

In this paper we establish Theorem \ref{A} for all Coxeter groups, thus
establishing analogues of standard
and costandard objects. Let $\mathcal{B}$ denote the additive
category of Soergel bimodules (see Section \ref{SB}), and $K^b(\mathcal{B})$ denote its homotopy
category. If $W$ is a Weyl group then $K^b(\mathcal{B})$ is closely
related to both
the derived category of the principal block of category $\mathcal{O}$
(see Section \ref{2.1}) and to the 
derived category of mixed equivariant sheaves on the flag variety (see \cite{Sch}). In the
first instance the the complexes $F_x$
describe tilting resolutions of standard objects (see Lemma \ref{til}); in the
second instance they describe the subquotients in the
weight filtrations of standard sheaves (see \cite[Section 3.5]{WW}).

\begin{remark} The conjecture in \cite{Ro2} has a misprint, it is
  trivially false as stated: take for example $b=sr$ and $b'=rs$, with
  $s\neq r\in \mathcal{S}$. Theorem \ref{Princ} is the correct formulation of
  this conjecture. 
\end{remark}

\begin{remark}For $i=0$, Theorem \ref{A} follows directly from the
  construction of the light leaves basis in \cite{Li1} 
\end{remark}

As explained by Rouquier in \cite{Ro2}, proving Theorem \ref{A} should
shed light on the search for a presentation of the monoidal category of Soergel bimodules by generators and relations. Indeed unpacking Theorem
\ref{A} gives many non-trivial lifting properties of morphisms between
Soergel bimodules (this is because if $\mathcal{B}$ is the category of
Soergel bimodules as above then $\mathfrak{B}_W\subseteq K^b(\mathcal{B})$). Note however that such a generators and relations
description for the category of Soergel bimodules has recently been obtained along different lines by Elias
and the second author \cite{EW} (following work of the first author \cite{li3}, 
Elias-Khovanov \cite{EKh} and Elias \cite{E}). 

On the other hand, it is desirable to have a
generators and relations description for the monoidal category
$\mathfrak{B}_W$. At present this seems like a difficult problem,
however we hope that Theorem \ref{A} (as well as the notion of
$\Delta$ and $\nabla$-exact complexes) provide a stepping stone
towards such a description. 

 \subsubsection*{}This paper is structured as follows.  Section \ref{200}
 is intended as an introduction  to understand where Rouquier's
 formula comes from and why Theorem \ref{Princ} can be seen as a
 generalization of the Ext formula \eqref{ext} in category
 $\mathcal{O}$.  In Section \ref{1.1} we fix some general notation. In
 Section \ref{SB} we give preliminaries about
 Soergel bimodules. In Section \ref{explicit} we introduce the 2-braid
 group. Section \ref{2.1} recalls some basic facts about BGG category
 $\mathcal{O}$ and in Section \ref{1.4} we prove that Theorem
 \ref{Princ} follows for Weyl groups from \eqref{ext}. This
 last result is  a particular case of Theorem
 \ref{Princ} but we hope that this section helps the reader gain
 some understanding of where Rouquier's formula comes from.

Section \ref{sosho} is devoted to the proof of Theorem \ref{Princ}, 
so we work in the context of arbitrary Coxeter groups.
In Section \ref{ca} we recall  that Soergel bimodules are filtered by 
geometrically defined submodules. To this filtration one associate "subquotient functors"
 from the category $\mathcal{B}$ of Soergel bimodules to the category of graded $R$-bimodules.
 Soergel's Hom formula says that one can recover the Hom space between two Soergel bimodules
 by knowing the Hom spaces between the succesive subquotients of these
 filtrations (see \eqref{eq:SH}).

In Section \ref{delta} we introduce the notion of $\Delta$- and
$\nabla$-exact complexes. Roughly speaking, these are complexes which
have good exactness properties under the "subquotient functors" mentioned above.
 Using Soergel's Hom formula we  prove that the Hom space between a
 $\Delta$-exact complex and any complex of Soergel bimodules  
in the homotopy category is zero. In Section \ref{1.2} we prove the main technical result 
used to prove Theorem \ref{Princ}, namely certain augmentations of the complexes
$F_\sigma$ (resp. $F_{\sigma}^{-1}$) are $\Delta$ (resp. $\nabla$-exact) if $\sigma$ is a
positive lift of an element of $W$. This is a very strong
property. Indeed, it appears to be if and only if, though we cannot
prove this. Finally, we
use the triangulated category structure of the homotopy category in
Section \ref{2.3} to conclude the proof of Theorem \ref{Princ}.

\begin{small}
\subsection{Acknowledgements} We would like to thank Wolfgang Soergel
who explained how useful it is to know that a complex of Soergel
bimodules is $\Delta$-exact. We also benefited from discussions with
Rapha\"el Rouquier. We would like to thank Catharina Stroppel, Thorge
Jensen and the referee for useful comments. Finally, both authors would like to
thank the MPI for a productive research environment.
\end{small}

\section{Preliminaries}\label{200}

\subsection{Graded bimodules and their homotopy categories}\label{1.1}

Given graded algebras $R$, $S$ over a field $k$ we denote by $\Mod{R}$ and $\BMod{R}{S}$ the categories of $\mathbb{Z}$-graded left $R$-modules and $\mathbb{Z}$-graded $(R,S)$-bimodules respectively. (The capitalized ``M'' is intended to remind us that we are considering graded modules.)  Morphisms in $\Mod{R}$ and $\BMod{R}{S}$ are those morphisms of bimodules which preserve the grading (that is are of degree zero). We write $\hom_{\Mod{R}}$, $\hom_{\BMod{R}{S}}$ (or $\hom$ if the context is clear) for homomorphisms in these categories. Given a graded (bi)module $M = \bigoplus_{i \in \mathbb{Z}} M_i$ we define the shifted module $M(n)$ by $M(n)_i = M_{n+i}$ and set
\[\Hom(M,N) = \bigoplus_{i \in \mathbb{Z}} \hom(M,N(i))\]
to be the graded vector space of all (bi)module homomorphisms between $M$ and $N$. Let us emphasise that $\Hom(M,N)$ is only used to simplify notation at some points; it does not refer to the morphisms in any category that we consider in this paper.

Given a Laurent polynomial with positive coefficients $P = \sum a_i v^i \in \mathbb{N}[v,v^{-1}]$ and a graded (bi)module $M$ we set
\[ P \cdot M := \bigoplus M(i)^{\oplus a_i}. \]
Given $M, N \in \BMod{R}{R}$ we denote their tensor product simply by
juxtaposition: $MN := M \otimes_R N$. This tensor product
makes $\BMod{R}{R}$ into a monoidal category. 

Given an additive category $\mathcal{A}$ we denote by $K^b(\mathcal{A})$ its homotopy category of bounded complexes, which is obtained as the quotient of the category of bounded chain complexes by the ideal of null-homotopic morphisms. We use upper indices to indicate the terms of a complex, and all chain complexes will be cohomological. That is, an object $A$ of $K^b(\mathcal{A})$ is a complex of the form
\[
\dots \to A^i \to A^{i+1} \to \dots
\]where each $A^i \in \mathcal{A}$ and only finitely many $A^i$ are
non-zero. 
If $\mathcal{A}$ is in addition a monoidal category then we
obtain an induced monoidal structure on $K^b(\mathcal{A})$ given by
tensor product of complexes. Again, we denote the operation of tensor
product by juxtaposition.

Given $A, B \in K^b(\mathcal{A})$ we write $\hom_K(A,B)$ for the
homomorphisms in $K^b(\mathcal{A})$, and $\hom^\bullet(A,B)$ for
the total complex of the double complex with $(i,j)^{th}$-term
$\hom(A^i, B^j)$ and differentials induced by the differentials on $A$ and $B$ \cite[11.7]{KS}. We
have $\hom_K(A,B) = H^0(\hom^\bullet(A,B))$.

We will primarily be concerned with the homotopy category
$K^b(\BMod{R}{R})$ of graded bimodules over a graded ring $R$, in
which case we always assume that all differentials are of degree zero.
Given
$A, B \in K^b(\BMod{R}{R})$ we denote by $\Hom^\bullet(A,B)$ the
total complex of the double complex with $(i,j)^{th}$-term
$\Hom(A^i,B^j)$ as above. As with bimodules we write
\[
\Hom_K(A,B) = \bigoplus_{i \in \mathbb{Z}} \hom(A, B(i))
\]
for the graded vector space of homotopy classes of morphisms of
complexes of all degrees. We have $\Hom_K(A,B) =
H^0(\Hom^\bullet(A,B))$.

Given $A \in
K^b(\mathcal{A})$ and any $i \in \mathbb{Z}$ we have a distinguished triangle \cite[Exercise 11.2]{KS}
\begin{equation} \label{eq:weighttriangle}
w_{\ge i}A \to A \to w_{< i} A \stackrel{[1]}{\longto}
\end{equation}
where $w_{\ge i}A$ (resp. $w_{< i} A$) denote the ``stupid
truncations'' of $A$: $(w_{\ge i}A)^j = A^j$ if $j \ge i$ and is zero
otherwise, whilst $(w_{< i}A)^j = A^j$ if $j < i$ and is
  zero otherwise.

\subsection{Soergel bimodules} \label{SB}

We start by recalling some aspects of Soergel bimodules, as explained
in \cite{So3} and its relation to Rouquier's categorification of braid
groups, as explained in \cite{Ro1}. Throughout, $(W,\mathcal{S})$
denotes a Coxeter system and $\mathcal{T} = \bigcup_{x \in W}
x\mathcal{S}x^{-1}$ denotes the reflections in $W$. Let $\ell : W \to
\mathbb{N}$ denote the length function and
$\le$ denote the Bruhat order on $W$.

Recall that a representation $V$ of $W$ is said to be reflection
faithful if it is both faithful and has the property that an element $w \in W$ fixes a hyperplane
if and only if it is a reflection. For example, the geometric
representation of any finite Weyl group is reflection faithful,
whereas this is never true for an affine Weyl group.  It is known
\cite{So3} that any Coxeter group has a reflection faithful representation defined
over the real numbers. Throughout, we let $V$ denote a fixed reflection
faithful representation over a field of characteristic $\ne 2$.

We let $R$ denote the regular functions on $V$ which we view as a
graded ring with $\deg V^* = 2$.  Alternatively, we can view $R$ as
the symmetric algebra on $V^*$.

Throughout, we write $B_s := R\otimes_{R^s}R(1)$. The category
$\mathcal{B}$ of Soergel bimodules is the smallest additive monoidal
Karoubian strict subcategory of $\BMod{R}{R}$ which contains $B_s$ for all
$s \in \mathcal{S}$ and is stable under arbitrary shifts. The
reader scared by so many adjectives will 
probably be happier with the following equivalent definition:
$\mathcal{B}$ is the full subcategory of $\BMod{R}{R}$ with objects
those bimodules isomorphic to direct sums of graded shifts of direct
summands of bimodules of the form $B_sB_t \dots
B_u$ for $s, t, \dots, u \in \mathcal{S}$.

\begin{remark}
At various points in our argument it is more convenient to work with a
reflection faithful representation when considering various
filtrations on Soergel bimodules. However, using 
\cite[Th\'eor\`eme 2.2]{li2} one can usually deduce theorems which are also valid for
the geometric representation once one has proven them for a reflection
faithful representation. (The key technical point is that the
reflection faithful representation $V$ defined by Soergel has a
subrepresentation $V_{\textrm{geom}} \subseteq V$ isomorphic to the geometric
representation.) In particular, having established Theorem \ref{A} for
Soergel bimodules built using $V$ it is easy to conclude, using the
results of \cite{li2}, that it also holds if instead we had used
$V_{\textrm{geom}}$.
\end{remark}

\subsection{The 2-braid group}\label{explicit} We will explain in more
detail the construction of Rouquier complexes. Throughout this paper we use
\[
K = K^b(\BMod{R}{R})
\]
to denote the homotopy category of bounded complexes of graded $R$-bimodules.

For a reflection $t\in \mathcal{T}$, consider $\alpha_t\in V^*$ an equation of the hyperplane of $V$ fixed by $t$. The $\alpha_t$ are unique up to non-zero scalar, we fix them arbitrarily. In equations, $\mathrm{ker}(\alpha_t)=V^t.$ 

For $s\in \mathcal{S}$ consider the graded $R$-bimodule morphism $\eta_s:R\rightarrow B_s(1)$ defined by the equation $\eta_s(1)=\frac{1}{2}(1\otimes \alpha_s+\alpha_s\otimes 1)$, and the multiplication morphism $m_s:B_s\rightarrow R(1).$ We define the complexes of graded $R$-bimodules:
$$F_s= \quad  \dots \to 0  \rightarrow B_s
 \buildrel {m_s} \over \longrightarrow
R(1) \rightarrow
0 \to \dots
$$
and
$$F_{s}^{-1}= \quad \dots \to 0  \rightarrow R(-1)
 \buildrel {\eta_s} \over \longrightarrow
B_s \rightarrow
0 \to \dots
$$
where in both complexes $B_s$ sits in complex degree zero (that is
$F_s^0 = (F_{s}^{-1})^0 = B_s$). It is straightforward to verify that in
$K$ we have isomorphisms
\[
F_s^{-1}F_s \cong F_sF_s^{-1} \cong R
\]
which justifies the notation.

Recall that the braid group $B_W$ of the Coxeter system $(W,\mathcal{S})$ is defined by the generators
$\mathbf{{S}}=\{\mathbf{s}\}_{s\in\mathcal{S}}$ and relations 
\[\underbrace{\mathbf{sts}\cdots}_{m_{st}\mathrm{terms}}\cong \underbrace{\mathbf{tst}\cdots}_{m_{st}\mathrm{terms}}.\]
Let $B_W^+$ denote the submonoid of $B_W$ generated by
$\mathbf{S}$.

As explained in the introduction, in \cite{Ro1} Rouquier proves that
for every two decompositions of an element of $B_W$ in a product of
the generators and their inverses there exist a canonical isomorphism in $K$ between the corresponding
product of $F_s$. If $\sigma$ is an element of $B_W$ as in the introduction we denote by $F_{\sigma}$ the corresponding element in
$K$.  
Let $\sigma \in B_W^+$ be the canonical positive lift of $w \in
W$. Then we define $F_w=F_{\sigma}$ and $E_w=F_{w^{-1}}^{-1}.$  

\subsection{Review of category $\mathcal{O}$}\label{2.1}
Here we give a very quick review of the facts that we will need of category $\mathcal{O}.$ For more details see \cite{Hu}. 

Let $\mathfrak{g}\supset \mathfrak{b}\supset \mathfrak{h}$ be
respectively a complex semi-simple Lie algebra, a Borel and Cartan
subalgebra and $W$ the Weyl group. Let $\mathcal{O}$ be the
Bernstein-Gelfand-Gelfand category of finitely generated
$\mathfrak{h}$-diagonalizable and locally $\mathfrak{b}$-finite
$\mathfrak{g}$-modules.  

For all $\lambda\in \mathfrak{h}^*$ we have a standard module, the
\textit{Verma module}
$\Delta(\lambda)=\mathcal{U}(\mathfrak{g})\otimes_{\mathcal{U}(\mathfrak{b})}\mathbb{C}_{\lambda}$,
where $\mathbb{C}_{\lambda}$ denotes the irreducible
$\mathfrak{h}$-module with weight $\lambda$ inflated via the
surjection $\mathfrak{b} \onto \mathfrak{b} / [\mathfrak{b},
\mathfrak{b}] \cong \mathfrak{h}$ to a module over the Borel
subalgebra. The module $\Delta(\lambda)$ has a unique simple quotient
$L(\lambda)$ and we denote $P(\lambda)$ its projective cover. Let $D$
be a duality on $\mathcal{O}$ that fixes simple modules (up to
isomorphism), we put $\nabla(\lambda)=D\Delta(\lambda).$ Recall that $T$ 
is a \textit{tilting object} in the category $\mathcal{O}$
if and only if $T$ and $DT$ have filtrations by $\Delta$-modules.

The dot-action of the Weyl group $W$ on $\mathfrak{h}^*$
is given by $w\cdot   \lambda=w(\lambda+\rho)-\rho,$ where $\rho$ is
the half-sum of the positive roots. For $\lambda\in \mathfrak{h}^*$ we
denote by $\mathcal{O}_{\lambda}$ the full subcategory of
$\mathcal{O}$ with objects the modules killed by some power of the
maximal ideal $\mathrm{Ann}_{\mathcal{Z}}\Delta(\lambda)$ of the
center $\mathcal{Z}$ of $\mathcal{U}(\mathfrak{g})$. This yields
a decomposition $$\mathcal{O}=\bigoplus_{\lambda\in
  \mathfrak{h}^*/(W\cdot)}\mathcal{O}_{\lambda}.$$The
subcategories $\mathcal{O}_{\lambda}$ corresponding to integral weights are
indecomposable, though this is not true for general weights.

Consider $\mu,\lambda\in\mathfrak{h}^*$ such that $\lambda-\mu$ is an
integral weight. Let $E(\mu-\lambda)$ be the finite dimensional
irreducible $\mathfrak{g}$-module with extremal weight
$\mu-\lambda$. Then, we define the translation functor
$T_{\lambda}^{\mu}:\mathcal{O}_{\lambda}\rightarrow \mathcal{O}_{\mu},
M\mapsto \mathrm{pr}_{\mu}(E(\mu-\lambda)\otimes M)$, where
$\mathrm{pr}_{\mu}$ is the projection functor onto the block
$\mathcal{O}_{\mu}.$ The functors $T_{\lambda}^{\mu}$ and
$T_{\mu}^{\lambda}$ are left and right adjoints to each other. 

If we choose $\mu$ a singular weight with $\{1,s\}$ the stabilizer
under the dot-action of the Weyl group, and $s$ a simple reflection,
we can define the wall-crossing functor as the composition
$\theta_s=T_{\mu}^{0}T_{0}^{\mu}.$

Let $W$ be a Weyl group and $R$ as in Section \ref{SB}. Let
$C=R/(R_+^W)$ be the coinvariant algebra. We identify $R/R_+$ with the
ground field $\mathbb{C}.$ In \cite[Endomorphismensatz]{So-1} Soergel
proves that there exists an isomorphism $C\cong
\mathrm{End}_{\mathcal{O}}(P(w_0\cdot 0))$, and in \cite[\S 2.3]{MS2} or \cite{BeBeMi} it is
proved that  the exact
functor $$\mathbb{V}=\mathrm{Hom}_{\mathcal{O}}(P(w_0\cdot
0),-):\mathcal{O}_0\rightarrow C-\mathrm{mod} $$
is fully faithful on tilting objects (this is an alternate version of
\cite[Struktursatz 9]{So-1}). From now on, the objects $P(w\cdot 
0), \Delta(w\cdot 0)$ and $\nabla(w\cdot 0)$ will be denoted $P(w),
\Delta(w)$ and $\nabla(w)$.

\subsection{Rouquier's formula for Weyl groups}\label{1.4}
In this Section we prove Theorem \ref{A} for Weyl groups using formula
\eqref{ext} in the introduction. Of course it is a corollary of Theorem \ref{A} proved in Section \ref{proof}, but we consider this section important to understand where this formula comes from. Using the notation we have introduced we restate Rouquier's formula.
 If $w,v\in W$ we have the following isomorphisms
\begin{equation*}
\mathrm{Hom}_{K}(F_w,E_{v}[i])\cong
\begin{cases}
R& \text{if } w=v \text{ and } i=0,\\
0& \text{otherwise.}
\end{cases}
\end{equation*}

 Consider the bounded complex of functors from $\mathcal{O}_0$ to itself, 
\[
\Phi_s=\quad \dots \to 0\to \theta_s\to \Id\to 0 \to \dots
\]
where $\theta_s$ is in degree zero and the map from $\theta_s$ to the identity functor is the counit of the adjunction. Since the functors involved are all exact, this complex defines an exact functor
$$\Phi_s : K^b(\mathcal{O}_0)\rightarrow K^b(\mathcal{O}_0)$$
by applying the complex of functors to a complex of modules and then taking the total complex of the resulting double complex.

There is another bounded complex of functors, 
$$\Psi_s=\quad \dots \to 0\to \Id\to\theta_s\to 0 \to \dots$$
giving a functor $\Psi_s$ from $K^b(\mathcal{O}_0)$ to itself, where again $\theta_s$ is in degree zero, and this time the map from the identity functor to $\theta_s$ is the unit of the adjunction. 

 If $w=s_1\cdots s_n$  is a reduced expression of an element of  
$W$, we put $\Psi_w=\Psi_{s_1}\cdots\Psi_{s_n}$ and
$\Phi_w=\Phi_{s_1}\cdots\Phi_{s_n}$. (This is well defined because the
functors $\Phi_s$ and $\Psi_s$ satisfy the braid relations.)

\begin{lem}\label{til} If  $w_0$ is the longest element of $W$ and $w\in W$, then the complex $\Phi_w(\Delta(w_0))$  is a tilting resolution of $\Delta(w_0w^{-1})$ and the complex $\Psi_w(\nabla(w_0))$ is a tilting resolution of $\nabla(w_0w^{-1})$.
\end{lem}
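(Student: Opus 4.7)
The plan is to prove both statements simultaneously by induction on $\ell(w)$. Two standard features of the principal block of category $\mathcal{O}$ drive the argument: the translation functor $\theta_s$ is exact and preserves tilting modules, and for any $y\in W$ with $ys<y$ one has short exact sequences
\[
0 \to \Delta(ys) \to \theta_s\Delta(y) \to \Delta(y) \to 0,
\qquad
0 \to \nabla(y) \to \theta_s\nabla(y) \to \nabla(ys) \to 0,
\]
where the second map in the first sequence is the counit, and the first map in the second sequence is the unit, of the self-adjunction of $\theta_s$. The base case $w=e$ is immediate: $\Phi_e = \Psi_e = \Id$ and, since $w_0\cdot 0$ is antidominant, $\Delta(w_0)=L(w_0)=\nabla(w_0)$ is already tilting, hence is its own trivial tilting resolution.

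For the inductive step, fix a reduced expression $w = sw'$ with $\ell(w')=\ell(w)-1$. Because the $\Phi_s$ (resp.\ $\Psi_s$) satisfy the braid relations, $\Phi_w = \Phi_s\Phi_{w'}$ (resp.\ $\Psi_w = \Psi_s\Psi_{w'}$). Set $y := w_0(w')^{-1}$, so that $w_0w^{-1}=ys$; from $\ell(w_0 u)=\ell(w_0)-\ell(u)$ one has $\ell(ys) = \ell(y)-1$ and so $ys<y$. By induction, $\Phi_{w'}(\Delta(w_0))$ is a bounded complex of tilting modules quasi-isomorphic to $\Delta(y)$. Applying $\Phi_s$ and totalising produces a bounded complex of tilting modules (since $\theta_s$ preserves tilting), and exactness of $\theta_s$ lets us compute the cohomology of this total complex as that of the two-term complex $\theta_s\Delta(y)\to\Delta(y)$ with differential the counit. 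The first short exact sequence above identifies this cohomology with $\Delta(ys)=\Delta(w_0w^{-1})$ concentrated in degree $0$, as desired. The $\nabla$-case is strictly parallel: one ends with the two-term complex $\nabla(y)\to\theta_s\nabla(y)$ with differential the unit, whose cohomology is $\nabla(ys)=\nabla(w_0w^{-1})$ by the second short exact sequence.

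The main obstacle is really only the invocation of the two short exact sequences above, which are classical but nontrivial structural facts describing how translation through a wall acts on standards and costandards; they can be found in Humphreys \cite{Hu}. Modulo these, the argument is a clean induction resting on the well-definedness of $\Phi_w$ and $\Psi_w$ (the braid relations) together with the exactness and tilting-preservation of $\theta_s$.
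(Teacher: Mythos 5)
Your proof is correct and takes essentially the same route as the paper: both reduce to the fact that $\theta_s$ is exact and tilting-preserving, and run an induction on $\ell(w)$ driven by the short exact sequence $0\to\Delta(ys)\to\theta_s\Delta(y)\to\Delta(y)\to 0$ (the paper phrases it as $0\to\Delta(w)\to\theta_s\Delta(w)\to\Delta(ws)\to 0$ for $ws>w$ together with $\theta_s\Delta(x)\cong\theta_s\Delta(xs)$, which is equivalent). The paper's write-up is terser and omits the $\nabla$-case and the explicit inductive set-up, which you have filled in accurately.
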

\begin{proof}
We will prove only the first statement, the second is proven similarly.
 As $\Delta(w_0)$ is simple, and the duality $D$ fixes simple modules,
 we have that $\Delta(w_0)$ is tilting. As wall-crossing functors
 preserve tiltings (see for example \cite[Prop. 11.1 (e)]{Hu}), the
 terms of the complex $\Phi_w(\Delta(w_0))$ are tilting. So we only
 need to prove that $\Phi_w(\Delta(w_0))$  is a  resolution of
 $\Delta(w_0w^{-1})$, or in other words,
 that \begin{equation}\label{Hi}H^i(\Phi_w(\Delta(w_0)))=\delta_{i,0}\Delta(w_0w^{-1}).\end{equation}

 For every $w$ such that $ws>w$, we have the well known exact sequence: 
$$0\to \Delta(w)\to\theta_s\Delta(w)\to \Delta(ws)\to
0$$ 
This sequence combined with the fact that for every $x \in W$ we have
$\theta_s\Delta(x)\cong \theta_s\Delta(xs),$ proves \eqref{Hi}, by
induction on the length of $w$. \end{proof}

We have the following sequence of isomorphisms
\begin{displaymath}
\begin{array}{lll}
\Ext^i_{\CO}(\Delta(w_0w^{-1}),\nabla(w_0v^{-1})) &\simeq &   \Hom_{K^b(\CO)}(\Phi_w(\Delta(w_0)),\Psi_{v}(\nabla(w_0))[i])\\
&\simeq &    \Hom_{K^b(C)}(F_{w}\otimes \mathbb{C},F_{v^{-1}}^{-1}\otimes \mathbb{C}[i])  \\
&\simeq & \Hom_{K^b(R\otimes R)}(F_w,F_{v^{-1}}^{-1}[i])\otimes_R \mathbb{C} .
\end{array}
\end{displaymath}
The first isomorphism follows from Lemma \ref{til} and the fact that if $M$ and $N$ are tilting objects, then $\mathrm{Ext}^i_{\CO}(M,N)=0$ for all $i>0$ (see for example \cite[Prop. 11.1 (f)]{Hu}). 

For the second isomorphism we apply $\mathbb{V}$ and remark that on
the one hand we have isomorphisms of functors from
$K^b(\mathcal{O}_0)$ to $K^b(C)$:  $\mathbb{V}\Phi_s\simeq
F_s\mathbb{V}$ and $\mathbb{V}\Psi_s\simeq F_s^{-1}\mathbb{V}$ (see \cite[Theorem 10]{So-1}), and
on the other $\mathbb{V}(\nabla(w_0))\simeq
\mathbb{V}(\Delta(w_0))\simeq \mathbb{C}$ (which follows because
$\nabla(w_0) \cong \Delta(w_0)$ is simple). This together with the fact
that  
 $\mathbb{V}$ is fully faithful on tilting objects  proves the second isomorphism. The third isomorphism is a consequence of \cite[Prop. 8]{So0}.

The graded Nakayama Lemma and \eqref{ext} of the introduction allows
us to conclude the proof of the second case of Theorem \ref{Princ},
and the first case ($w=v$ and $i=0$) follows easily by the adjunctions
arguments as $F_s$ is right and left adjoint of $F_{s^{-1}}$ (see
\cite{Ri} or \cite[Lemma 5.1 and Corollary 5.5]{MS2}).

\section{Arbitrary Coxeter groups}\label{proof}\label{sosho}

In this Section we prove Rouquier's formula for arbitrary Coxeter groups. This involves a more detailed look at Soergel bimodules, and in particular their behaviour under various natural subquotient functors.

\subsection{Filtration by support}\label{ca}

For $x\in W$ consider the reversed graph$$\Gr(x)=\{(xv,v)\,\vert\, v\in V\}\subseteq V\times V$$
and for any subset $A$ of $W$ consider the union of the corresponding graphs
$$\Gr(A)=\bigcup_{x\in A}\Gr(x)\subseteq V\times V.$$
Given a finite set $A \subset W$ we can view 
$\Gr(A)$ as a subvariety of $V \times V$. If we identify
$R\otimes R$ with the regular functions on $V\times V$ then $R_A$,
the regular functions on $\Gr(A)$, is naturally a $\mathbb{Z}$-graded
$R$-bimodule. If $A=\{x_1,\ldots, x_n\}$ we will also write
$R_A=R_{x_1,\ldots, x_n}.$ For example, one may check that given $x
\in W$ the bimodule $R_x$ has the following simple description: $R_x
\cong R$ as a left module, and the right action is twisted by $x$: $m
\cdot r = mx(r)$ for $m \in R_x$ and $r \in R$.

For any $R$-bimodule $M \in \BMod{R}{R}$ we can view $M$ as an $R
\otimes R$-module (because $R$ is commutative) and hence as a
quasi-coherent sheaf on $V \times V$. Given any subset $A \subseteq W$
(not necessarily finite) we define
\[
\Gamma_A M := \{ m \in M \; | \; \supp m \subseteq \Gr(A) \}
\]
to be the subbimodule consisting of elements whose support is contained in $\Gr(A)$. Note that $\Gamma_A$ is an endofunctor of $\BMod{R}{R}$.

In the following we will abuse notation and write $\le x$ for the set
$\{ y \in W \; | \; y \le x \}$ and similarly for $<x$, $\ge x$ and $>
x$. With this notation, we obtain functors $\Gamma_{\le x}$, $\Gamma_{< x}$,
$\Gamma_{\ge x}$ and $\Gamma_{> x}$. For example $\Gamma_{\le x} =
\Gamma_{\{ y \in W \; | \; y \le x \}}$.

In the sequel an important role will be played by the ``subquotient functors'':
\begin{gather*}
  \Gamma_{\ge x / > x} : M \mapsto \Gamma_{\ge x}M / \Gamma_{> x}M \\
 \Gamma_{\le x / < x} : M \mapsto \Gamma_{\le x}M / \Gamma_{< x}M \\
\end{gather*}Let $M \in \BMod{R}{R}$ and assume that $M$ is finitely generated as
an $R$-bimodule, and that the support of $M$ is contained in $\Gr(A)$
for some finite set $A \subseteq W$. We say that $M$ has a \emph{$\Delta$-filtration} and write $M \in \mathcal{F}_{\Delta}$ if for all $x \in W$,
$\Gamma_{\ge x/>x} M$ is isomorphic to a direct sum of shifts of $R_x$. Similarly, we say that $M$ has a \emph{$\nabla$-filtration} and write $M \in \mathcal{F}_{\nabla}$ if for all $x \in W$,
$\Gamma_{\le x/<x} M$ is isomorphic to a direct sum of shifts of
$R_x$. We call the full subcategories $\mathcal{F}_{\Delta}$ and
$\mathcal{F}_\nabla$ of $\BMod{R}{R}$ the categories of \emph{bimodules with
  $\Delta$-flag} and \emph{bimodules with $\nabla$-flag} respectively.

A first important result about the categories $\mathcal{F}_{\Delta}$
and  $\mathcal{F}_{\nabla}$ is Soergel's ``hin-und-her Lemma''
\cite[Lemma 6.3]{So3}. It states that given any enumeration $w_0, w_1,
\dots $ of the elements of $W$ compatible with the Bruhat
order (i.e. $w_i \le w_j \Rightarrow i\le j$) then $M
\in \BMod{R}{R}$ belongs to $\mathcal{F}_\nabla$ if and only if for
all $i$ the subquotient
\[
\Gamma_{\{ w_0, \dots, w_i\}} M /
\Gamma_{\{ w_0, \dots, w_{i-1} \}} M
\]
is isomorphic to a direct sum of
shifts of $R_{w_i}$, in which case the natural map
\begin{equation} \label{lem:hinundher1}
\Gamma_{\le w_i / < w_i} M \to \Gamma_{\{ w_0, \dots, w_i\}} M /
\Gamma_{\{ w_0, \dots, w_{i-1} \}} M
\end{equation}
is an isomorphism. Similarly, $M \in \mathcal{F}_{\Delta}$ if and
only if for all $i$,
\[
\Gamma_{\{ w_i, w_{i+1},\dots\} \cap A} M /
\Gamma_{\{ w_{i+1}, \dots \} \cap A} M
\]
are isomorphic to direct sums
of shifts of $R_{w_i}$, in which case the natural map 
\begin{equation} \label{lem:hinundher2}
\Gamma_{\ge w_i / > w_i} M \to \Gamma_{\{ w_i, w_{i+1},\dots\} \cap A} M /
\Gamma_{\{ w_{i+1}, \dots \} \cap A} M
\end{equation}
is an isomorphism (here $A \subseteq W$ denotes a finite set such that
$\supp M \subseteq \Gr(A)$).

The following lemma (one of the first consequences of the hin-und-her
Lemma) implies that if $M \in \mathcal{F}_\nabla$ then so are $\Gamma_{\le
  x} M$ and $M/ \Gamma_{\le x} M$.

\begin{lem} Let $M \in
\mathcal{F_\nabla}$. \label{nablaproj}
\begin{enumerate}
\item $\Gamma_{\le x / < x} (\Gamma_{\le y} M)$ is
zero unless $x \le y$ in which case the natural map
\begin{equation}
\Gamma_{\le x / < x} (\Gamma_{\le y} M) \to \Gamma_{\le x / < x} M
\end{equation}
is an isomorphism.
\item $\Gamma_{\le x/<x}(M/\Gamma_{\le y} M)$ is
zero unless $x \not \le y$ in which case the natural map
\begin{equation}
\Gamma_{\le x / < x} M \to \Gamma_{\le x/<x}(M/\Gamma_{\le y} M)
\end{equation}
is an isomorphism.
\end{enumerate}
\end{lem}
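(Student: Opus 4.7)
The plan is to deduce both statements from Soergel's hin-und-her Lemma. I would fix a compatible enumeration $w_0, w_1, \ldots$ of $W$ whose initial segment $\{w_0, \ldots, w_k\}$ equals $\{z \in W \mid z \le y\}$; by hin-und-her applied to $M$, this identifies $\Gamma_{\le y} M$ with the $k$-th step $\Gamma_{\{w_0, \ldots, w_k\}} M$ of the canonical $\nabla$-filtration of $M$. The strategy is then to verify that the canonical filtration on each of the subquotients $\Gamma_{\le y} M$ and $M/\Gamma_{\le y} M$ has the successive quotients predicted by hin-und-her, and use the converse direction of hin-und-her to conclude $\mathcal{F}_\nabla$-membership and identify the natural maps appearing in the lemma.

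For part~(1), the core observation is that the inclusions $\Gr(\{w_0, \ldots, w_i\}) \subseteq \Gr(\le y)$ for $i \le k$ and $\Gr(\le y) \subseteq \Gr(\{w_0, \ldots, w_i\})$ for $i \ge k$ imply directly from the definition of $\Gamma_A$ that
\[
\Gamma_{\{w_0, \ldots, w_i\}}(\Gamma_{\le y} M) = \begin{cases} \Gamma_{\{w_0, \ldots, w_i\}} M & i \le k, \\ \Gamma_{\le y} M & i \ge k. \end{cases}
\]
The induced filtration on $\Gamma_{\le y} M$ therefore has successive quotients $\Gamma_{\le w_i / <w_i} M$ for $i \le k$ (by hin-und-her on $M$) and zero thereafter. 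The converse direction of hin-und-her then places $\Gamma_{\le y} M$ in $\mathcal{F}_\nabla$ and identifies $\Gamma_{\le w_i/<w_i}(\Gamma_{\le y} M)$ with $\Gamma_{\le w_i/<w_i} M$ when $w_i \le y$ and with zero otherwise; the natural maps of~(1) correspond to the identity under these identifications.

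For part~(2), the extra input is the standard local-cohomology vanishing $\Gamma_{\le y}(M/\Gamma_{\le y} M) = 0$: since $\Gamma_{\le y} M$ is finitely generated over the Noetherian ring $R \otimes R$, some power of the defining ideal $I$ of $\Gr(\le y)$ annihilates $\Gamma_{\le y} M$, so any lift $m$ of $\bar m \in M/\Gamma_{\le y} M$ satisfying $I^{n}\bar m = 0$ also satisfies $I^{n+n''} m = 0$, forcing $\bar m = 0$. This immediately gives the vanishing $\Gamma_{\le x/<x}(M/\Gamma_{\le y} M) = 0$ for $x \le y$. The same Noetherian argument, applied with the ideal $J_i$ of $\Gr(\{w_0, \ldots, w_i\})$ together with the containment $J_i \subseteq I$ valid for $i \ge k$, shows $\Gamma_{\{w_0, \ldots, w_i\}}(M/\Gamma_{\le y} M) = \Gamma_{\{w_0, \ldots, w_i\}} M/\Gamma_{\le y} M$ for $i \ge k$. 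Applying hin-und-her to $M/\Gamma_{\le y} M$ with the enumeration above then yields a $\nabla$-filtration whose nonzero subquotients (for $i > k$) are precisely those of $M$, and a final chase identifies the natural map of~(2) with the identity for $x \not\le y$.

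The main obstacle is this Noetherian step: establishing that $\Gamma_{\{w_0, \ldots, w_i\}}$ commutes with passage to the quotient $M/\Gamma_{\le y} M$ for $i \ge k$. It hinges on combining the reversed containment of ideals $J_i \subseteq I$ with a uniform annihilator of $\Gamma_{\le y} M$. Once this is in place, the remaining work in both parts is just bookkeeping between the formal support manipulations of part~(1) and the hin-und-her Lemma.
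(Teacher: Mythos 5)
Your proof is correct, and in part (2) it takes a genuinely different route from the paper's. The paper also fixes an enumeration $w_0,w_1,\dots$ with $\{\le y\}=\{w_0,\dots,w_k\}$ and computes the subquotients of the image filtration $F_i$ (image of $\Gamma_{\{w_0,\dots,w_i\}}M$ in $M/\Gamma_{\le y}M$) via the third isomorphism theorem; but then, to identify $F_i$ with $\Gamma_{\{w_0,\dots,w_i\}}(M/\Gamma_{\le y}M)$, it proves and invokes an auxiliary statement (the paper's Lemma~\ref{lem:filt}): any filtration of a bimodule whose $i$-th subquotient is a direct sum of shifts of $R_{w_i}$ must coincide with the canonical $\Gamma$-filtration, the key input being that every non-zero element of $R_{w_k}$ has full support $\Gr(w_k)$. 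You instead establish the identification head-on by an annihilator argument: $R\otimes R$ is Noetherian, $\Gamma_{\le y}M$ is finitely generated so a uniform power $I^{n''}$ of $I=I(\Gr(\le y))$ kills it, and combining this with the reversed ideal containment $J_i\subseteq I$ (valid for $i\ge k$) shows both that $\Gamma_{\le y}(M/\Gamma_{\le y}M)=0$ and that $\Gamma_{\{w_0,\dots,w_i\}}(M/\Gamma_{\le y}M)$ equals the image of $\Gamma_{\{w_0,\dots,w_i\}}M$ for $i\ge k$. Both routes then finish the same way by feeding the computed subquotients into the hin-und-her Lemma. The paper's Lemma~\ref{lem:filt} is a cleaner, reusable principle applicable to any filtration with the right subquotients; your Noetherian computation is more elementary and self-contained, replacing the support argument on $R_{w_k}$ with basic commutative algebra and the definition of $\Gamma_A$ via annihilators. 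Part (1) of your argument (the direct containment observation for $\Gr(\{w_0,\dots,w_i\})$ versus $\Gr(\le y)$) is the routine verification the paper dismisses as ``straightforward,'' and it is carried out correctly.
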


\begin{proof} Statement (1) is straightforward. Let us prove (2). 
Choose an enumeration $w_0, w_1, \dots$ of the
  elements of $W$ compatible with the Bruhat order and such that $\{
  \le y \} = \{ w_0, w_1, \dots, w_k \}$ for some $k$. Let us
  abbreviate $\Gamma_{\le i} = \Gamma_{\{ w_0, \dots, w_i\}}$,
  $\Gamma_{< i} = \Gamma_{\{ w_0, \dots, w_{i-1}\}}$ and 
\[ \Gamma_{\le  i / < i} (M) = \Gamma_{\le i} (M) /\Gamma_{< i}(M). \]
By the hin-und-her lemma we see that (2) is equivalent to the
statement:
\begin{enumerate}
\item[(2')] $\Gamma_{\le i/<i}(M/\Gamma_{\le k} M)$ is
zero unless $i \not \le k$ in which case the natural map
\begin{equation}
\Gamma_{\le i / < i} M \to \Gamma_{\le i/<i}(M/\Gamma_{\le k} M)
\end{equation}
is an isomorphism.
\end{enumerate}
To prove (2') consider the
filtration
\[ \dots \subseteq F_i \subseteq F_{i+1} \subseteq \dots \]
on $M / \Gamma_{\le k} M$ obtained by taking the image of the
filtration
\[
\dots  \subseteq \Gamma_{\le i}M \subseteq \Gamma_{\le i+1} M
\subseteq \dots \]
on $M$. By the third isomorphism theorem
\[
F_i/F_{i-1} \cong \begin{cases} 0 & \text{if $i \le k$} \\ \Gamma_{\le
    i / < i} M & \text{otherwise.}  \end{cases}
\]
By the lemma below we have $F_i = \Gamma_{\le i} (M/\Gamma_{\le
  k}M)$. Statement (2') now follows.
\end{proof} 

\begin{lem} \label{lem:filt} 
Let $N \in \BMod{R}{R}$ and fix an enumeration $w_0, w_1, \dots$ of
the elements of $W$ compatible with the Bruhat order. Suppose that we
have a filtration $0 = N_{-1} \subseteq N_0 \subseteq N_1 \subseteq
\dots \subseteq N_n = N$ of $N$ such that $N_i / N_{i-1}$ is
isomorphic to a direct sum of shifts of $R_{w_i}$ for $0 \le i \le
n$. Then $N_i = \Gamma_{\{w_0, w_1, \dots, w_i\}} N$.
\end{lem}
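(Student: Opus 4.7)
The plan is to prove the two containments $N_i \subseteq \Gamma_{\{w_0,\dots,w_i\}} N$ and $\Gamma_{\{w_0,\dots,w_i\}} N \subseteq N_i$ separately, working with the ideal $I_S \subseteq R \otimes R$ cutting out $\Gr(S) \subseteq V \times V$ for a finite subset $S \subseteq W$. Recall that $\Gamma_S M$ consists of those $m$ with $\supp m \subseteq \Gr(S) = V(I_S)$, equivalently those $m$ annihilated by some power of $I_S$. For the first containment, the bimodule $R_{w_k}$ is annihilated by $I_{w_k}$, which in turn contains $I_{\{w_0,\dots,w_i\}}$ for every $k \le i$. Hence $I_{\{w_0,\dots,w_i\}} \cdot N_k \subseteq N_{k-1}$ for each $k \le i$, and iterating gives $I_{\{w_0,\dots,w_i\}}^{\,i+1} \cdot N_i = 0$. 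This at once yields $N_i \subseteq \Gamma_{\{w_0,\dots,w_i\}} N$.

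For the reverse containment, I would take $m \in \Gamma_{\{w_0,\dots,w_i\}} N$ and let $j$ be the smallest index with $m \in N_j$; the goal is to conclude $j \le i$. Suppose, aiming for a contradiction, that $j > i$. Then $\bar m \in N_j/N_{j-1}$ is nonzero and lies in a direct sum of shifts of $R_{w_j}$. Now $\Gr(w_j) \cong V$ is irreducible, so $(R \otimes R)/I_{w_j}$ is a domain and $R_{w_j}$ is a free module of rank one over it; therefore any nonzero element of $N_j/N_{j-1}$ has annihilator exactly $I_{w_j}$, and hence support exactly $\Gr(w_j)$. In particular $\supp \bar m = \Gr(w_j)$.

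On the other hand, passing to a quotient can only enlarge annihilators of individual elements and hence shrink their supports, so $\supp \bar m \subseteq \supp m \subseteq \Gr(\{w_0,\dots,w_i\}) = \bigcup_{k \le i} \Gr(w_k)$. Combining these two conclusions gives $\Gr(w_j) \subseteq \bigcup_{k \le i} \Gr(w_k)$, and irreducibility forces $\Gr(w_j) \subseteq \Gr(w_k)$ for some $k \le i$; since both are graphs of invertible linear maps on $V$ they have the same dimension, hence $\Gr(w_j) = \Gr(w_k)$, and faithfulness of $V$ as a representation of $W$ gives $w_j = w_k$. This contradicts $j > i \ge k$ and the distinctness of the enumeration, completing the argument. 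The only genuinely non-formal step is the computation $\supp \bar m = \Gr(w_j)$, which rests on the fact that $R_{w_j}$ is torsion-free of rank one over the integral domain $(R \otimes R)/I_{w_j}$; everything else is either the definition of $\Gamma_S$ in terms of annihilators or a formal property of supports under surjections.
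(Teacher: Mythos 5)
Your proof is correct and follows essentially the same strategy as the paper: prove $N_i \subseteq \Gamma_{\{w_0,\dots,w_i\}}N$ from the filtration structure, and for the reverse inclusion take the minimal $k$ with $m \in N_k$ and use that a nonzero element of a direct sum of shifts of $R_{w_k}$ has support exactly $\Gr(w_k)$ to force $w_k \in \{w_0,\dots,w_i\}$. You simply supply more detail than the paper at the two places it elides: why such an element has support exactly $\Gr(w_k)$ (torsion-freeness over the domain $(R\otimes R)/I_{w_k}$), and why $\Gr(w_k)\subseteq\bigcup_{l\le i}\Gr(w_l)$ forces $w_k\in\{w_0,\dots,w_i\}$ (irreducibility plus faithfulness of $V$).
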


\begin{proof}
  Because $N_i$ is an extension of shifts of $R_{w_l}$ with $l \le i$ we have $N_i \subseteq \Gamma_{\{ w_0, \dots, w_i \} } N$. It remains the prove the reverse inclusion. So choose $m \in \Gamma_{\{ w_0, \dots, w_i \} }N$ and let $k$ be minimal such that $m \in N_k$ but $m \notin N_{k-1}$. Then the image of $m$ in $N_k /N_{k-1}$ is non-zero. Using that $N_k /N_{k-1}$ is isomorphic to a direct sum of shifts of $R_{w_k}$, and that any non-zero element of $R_{w_k}$ has support equal to $\Gr(w_k)$ we see that $\Gr(w_k)$ is contained in the support of $m$. Hence $w_k \in \{ w_0, \dots, w_i \}$, so $k \le i$, so $m \in N_i$. Hence $\Gamma_{\{ w_0, \dots, w_i \} } N \subseteq N_i$.
\end{proof}

The following lemma is the $\Delta$-version of
Lemma \ref{nablaproj}. It implies that if $M \in \mathcal{F}_\Delta$ then so are $\Gamma_{\ge x} M$ and $M
/\Gamma_{\ge x} M$ for any $x \in W$. The proof (which we omit) is
similar to that of Lemma \ref{nablaproj}.

\begin{lem} Let $M \in
\mathcal{F}_\Delta$. \label{deltaproj}
\begin{enumerate}
\item $\Gamma_{\ge x / > x} (\Gamma_{\ge y} M)$ is
zero unless $x \ge y$ in which case the natural map
\begin{equation}
\Gamma_{\ge x / > x} (\Gamma_{\ge y} M) \to \Gamma_{\ge x / > x} M
\end{equation}
is an isomorphism.
\item $\Gamma_{\ge x/>x}(M/\Gamma_{\ge y} M)$ is
zero unless $x \not \ge y$ in which case the natural map
\begin{equation}
\Gamma_{\ge x / > x} M \to \Gamma_{\ge x/>x}(M/\Gamma_{\ge y} M)
\end{equation}
is an isomorphism.
\end{enumerate}
\end{lem}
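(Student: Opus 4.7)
The plan is to mirror the proof of Lemma \ref{nablaproj}, replacing the downward-closed set $\{\le y\}$ by the upward-closed set $\{\ge y\}$ and invoking the $\Delta$-version of Soergel's hin-und-her lemma, which characterises $M \in \mathcal{F}_\Delta$ via filtrations by subbimodules of the form $\Gamma_{\{w_i, w_{i+1}, \dots\} \cap A} M$.

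First I would fix a finite set $A \subseteq W$ with $\supp M \subseteq \Gr(A)$ and an enumeration $w_0, \dots, w_n$ of $A$ compatible with the Bruhat order. Because $\{\ge y\}$ is Bruhat-upward closed, such an enumeration may be chosen so that $\{\ge y\} \cap A = \{w_k, w_{k+1}, \dots, w_n\}$ for some index $k$. Abbreviating $G_i := \Gamma_{\{w_i, \dots, w_n\}}$, both statements reduce, via the natural isomorphisms $\Gamma_{\ge w_i/>w_i} M \weq G_i M / G_{i+1} M$ supplied by hin-und-her, to analogues phrased in terms of this filtration. The case $x \notin A$ is immediate, since then $\{\ge x\} \cap A \subseteq \{>x\}$ and both sides vanish.

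Part (1) is straightforward: if $x \not\ge y$ then $\{\ge x\} \cap \{\ge y\} \subseteq \{>x\}$, forcing $\Gamma_{\ge x}(\Gamma_{\ge y} M) \subseteq \Gamma_{>x}(\Gamma_{\ge y} M)$ so that the subquotient vanishes; if $x \ge y$ then $\{\ge x\}$ and $\{>x\}$ are both contained in $\{\ge y\}$, and so $\Gamma_{\ge x/>x}(\Gamma_{\ge y} M) = \Gamma_{\ge x/>x} M$ on the nose.

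For part (2), consider the filtration $F_i$ on $M/G_k M$ defined as the image of $G_i M$. Clearly $F_i = 0$ for $i \ge k$, while for $i < k$ the inclusion $G_k M \subseteq G_{i+1} M$ combined with the second and third isomorphism theorems yields $F_i / F_{i+1} \cong G_i M / G_{i+1} M$, a direct sum of shifts of $R_{w_i}$. The main step, which I expect to take real bookkeeping, is the $\Delta$-analogue of Lemma \ref{lem:filt} identifying $F_i = G_i(M/G_k M)$; its proof is a maximal-index variant of the argument for Lemma \ref{lem:filt}, again using that any nonzero element of $R_w$ has support equal to $\Gr(w)$. Granting this, the hin-und-her lemma applied to $M/G_k M$ shows $M/G_k M \in \mathcal{F}_\Delta$ and naturally identifies $\Gamma_{\ge w_i/>w_i}(M/G_k M) \cong F_i/F_{i+1}$, which equals $\Gamma_{\ge w_i/>w_i} M$ for $i < k$ and vanishes for $i \ge k$. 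Naturality of these hin-und-her isomorphisms in the quotient map $M \to M/G_k M$ ensures that the isomorphism produced is the natural map appearing in the statement.
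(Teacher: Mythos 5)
Your proposal is correct and takes precisely the route the paper intends: the paper omits the proof with the remark that it is ``similar to that of Lemma~\ref{nablaproj}'', and your dualization---replacing the downward-closed set $\{\le y\}$ by the upward-closed $\{\ge y\}$, using the decreasing filtration $G_i = \Gamma_{\{w_i,\dots,w_n\}}$ from the $\Delta$-side of the hin-und-her lemma, and invoking the maximal-index variant of Lemma~\ref{lem:filt}---is exactly the intended adaptation. You have in fact supplied more detail than the paper does, and the details you give (including the observation $\Gamma_A\Gamma_B = \Gamma_{A\cap B}$ implicit in part (1) and the third-isomorphism-theorem computation of $F_i/F_{i+1}$ in part (2)) are all sound.
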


In \cite[Section 5]{So3} Soergel proves that Soergel bimodules
belong to both $\mathcal{F}_\Delta$ and $\mathcal{F}_\nabla$. Another
important result is Soergel's Hom formula \cite[Theorem 5.15]{So3}: for
any $M \in \mathcal{F}_\Delta$ and $N \in \mathcal{B}$, or $M \in
\mathcal{B}$ and $N \in \mathcal{F}_\nabla$, one has an isomorphism of
graded right $R$-modules 
\begin{equation} \label{eq:SH}
\Hom(M,N) \cong \bigoplus_{x \in W} \Hom( \Gamma_{\ge x/>x} M, \Gamma_{\le x/<x} N)(-2\ell(x)).
\end{equation}

Finally, it is natural to ask how the support of a bimodule changes under the functor $M \mapsto MB_s$. It is straightforward (see \cite[Lemma 4.14]{Wi}) to show that if $M \in \BMod{R}{R}$ has support contained in $\Gr(A)$ for some finite subset $A \subseteq W$ then we have\[
\supp (MB_s) \subseteq \Gr(A \cup As) \quad \text{and} \quad \supp (B_sM) \subseteq \Gr(A \cup sA),
\]
It follows that if $s, t, \dots, u \in \mathcal{S}$ then
\begin{equation} \label{BSsupport}
\supp (B_sB_t \dots B_u) \subseteq \Gr(\{ \id, s\} \{ \id, t\} \dots \{ \id, u \} ).
\end{equation}

\begin{remark}
This terminology $\Delta$-filtered and $\nabla$-filtered is intended to remind the reader of category
$\mathcal{O}$: $\mathcal{F}_\Delta$ (resp. $\mathcal{F}_\nabla$)
can be thought of as those objects with standard (resp. costandard)
filtrations. The above results show that Soergel bimodules can be
thought of as akin to tilting modules. As is well-known (and follows
from the vanishing formula \eqref{ext} of the introduction), the functor of
homomorphisms from (resp. to) a tilting module is exact on complexes of costandard (resp. standard)
filtered objects. It is this analogy which motivates the introduction
of $\Delta$- and $\nabla$-exact complexes in the next section.
\end{remark}

\subsection{$\Delta$- and $\nabla$-exact complexes}\label{delta}

The following definition is fundamental to our proof of Rouquier's formula:

\begin{defi}
  Let $A$ be a bounded complex of graded $R$-bimodules.
  \begin{enumerate}
  \item $A$ is \emph{$\Delta$-exact} if $A^i \in \mathcal{F}_{\Delta}$ for all $i$ and, for all $x \in W$ the complex $\Gamma_{\ge x/>x} A$ is homotopic to zero.
  \item $A$ is \emph{$\nabla$-exact} if $A^i \in \mathcal{F}_{\nabla}$ for all $i$ and, for all $x \in W$ the complex $\Gamma_{\le x/<x} A$ is homotopic to zero.
  \end{enumerate}
\end{defi}

\begin{remark} \hspace{2cm}
  \begin{enumerate}
\item Because an extension of acyclic complexes is acyclic it follows that any $\Delta$- or $\nabla$-exact complex is acyclic.
\item The canonical example of a $\Delta$-exact complex is the 3-term complex
\begin{equation} \label{eq:deltaexact}
0 \to \Gamma_{\ge y}M \to M \to M/\Gamma_{\ge y} M \to 0
\end{equation}
for some $M \in \mathcal{F}_{\Delta}$. That this sequence is
$\Delta$-exact follows from Lemma \ref{deltaproj}.

\item Similarly, if $M \in \mathcal{F}_\nabla$ then the sequence
\begin{equation} \label{eq:nablaexact}
0 \to \Gamma_{\le y}M \to M \to M/\Gamma_{\le y} M \to 0
\end{equation}
is $\nabla$-exact. This follows from Lemma \ref{nablaproj}.

\item If $A$ is $\Delta$-exact then so are $\Gamma_{\ge x}A$ and $A /
  \Gamma_{\ge x} A$ for any $x \in W$. Similarly, if $A$ is
  $\nabla$-exact then so are $\Gamma_{\le x}A$ and $A / \Gamma_{\le x}
  A$ for any $x \in W$. Again these statements follow easily from Lemmas
  \ref{nablaproj} and \ref{deltaproj}.
  \item If $A$ is a bounded complex of modules belonging to
    $\mathcal{F}_{\Delta}$ then $\Gamma_{\ge x/>x} A^i$ is isomorphic
    to a direct sum of shifts of $R_x$ for all $x \in W$ and $i \in
    \mathbb{Z}$. One may show that a bounded complex of modules all of
    whose terms are isomorphic to direct sums of shifts of $R_x$'s is
    homotopic to zero if and only if it is acyclic. (Such a complex of
    $R$-bimodules is homotopic to zero if and only if it is homotopic
    to zero as a complex of left $R$-modules. However each $R_x$ is
    projective as a left $R$-module, and so any acyclic complex
    splits.) Hence we could have replaced ``homotopic to zero'' by
    ``exact'' in the above definition. This also explains the origin of the terminology.
  \end{enumerate}
\end{remark}

\begin{ex}
Consider the complexes
\begin{gather*}
\widetilde{F_s} = \quad \dots \to R_s(-1) \stackrel{\eta_s'}{\to} B_s
\stackrel{m_s}{\to} R(1) \to \dots \\
\widetilde{E_s} = \quad \dots \to R(-1) \stackrel{\eta_s}{\to} B_s
\stackrel{m_s'}{\to} R_s(1)  \to \dots
\end{gather*}
where $m_s$ and $\eta_s$ are as in Section \ref{explicit} and $\eta_s'$ and
$m_s'$ are determined by $\eta_s'(1) = \frac{1}{2}(\alpha_s\otimes 1 - 1 \otimes
\alpha_s)$ and $m_s'(f \otimes g) = fs(g)$ respectively.
We claim that $\widetilde{F_s}$ is $\Delta$-exact, but that
$\widetilde{E_s}$ is not.
Indeed, one has isomorphisms
\begin{gather*}
\Gamma_{\ge s} \widetilde{F_s}\cong \quad \dots \to R_s(-1)
\stackrel{\id}{\longto} R_s(-1) \to 0 \to \dots \\
\Gamma_{\ge s} \widetilde{E_s}\cong \quad \dots \to 0 \to R_s(-1)
\stackrel{\alpha_s \cdot}{\longto} R_s(1)  \to \dots
\end{gather*}
and
\begin{gather*}
\Gamma_{\ge \id / \ge s} \widetilde{F_s}\cong \quad \dots \to 0 \to R(1)
\stackrel{\id}{\longto} R(1) \to \dots \\
\Gamma_{\ge \id / \ge s} \widetilde{E_s}\cong \quad \dots \to R(-1)
\stackrel{\alpha_s \cdot}{\longto} R(1) \to 0 \to \dots
\end{gather*}
 A dual calculation shows that
$\widetilde{E_s}$ is $\nabla$-exact and that $\widetilde{F_s}$ is
not.

In fact, $\widetilde{F_s}$ and $\widetilde{E_s}$ are examples of
augmented Rouquier complexes which will be introduced in Section
\ref{2.2}. The above calculations give a concrete example of Proposition \ref{h}.
\end{ex}

The following lemma is the central technical tool of this paper. It shows that $\Delta$- (resp. $\nabla$-) exact complexes are acyclic for the functor of homomorphisms to (resp. from) a Soergel bimodule:

\begin{prop} \label{prop:exact}
Let $A$ denote a bounded complex of $R$-bimodules and let $B \in \mathcal{B}$ be a Soergel bimodule:
  \begin{enumerate}
  \item if $A$ is $\Delta$-exact then $\Hom_K(A, B)=0$.
\item if $A$ is $\nabla$-exact then $\Hom_K(B, A)=0$.
  \end{enumerate}
\end{prop}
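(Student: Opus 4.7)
The plan is to show that the morphism complex $\Hom^\bullet(A,B)$ (in part (1)) or $\Hom^\bullet(B,A)$ (in part (2)) is null-homotopic as a complex of graded abelian groups; since $\Hom_K(-,-) = H^0(\Hom^\bullet(-,-))$, this yields the desired vanishing (and in fact the stronger statement that the graded Hom into every shift $[i]$ is zero).

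For part (1), I would apply Soergel's Hom formula \eqref{eq:SH} in each complex degree. Since each $A^i\in\mathcal{F}_\Delta$ and $B\in\mathcal{B}$, we obtain
\[
\Hom(A^i,B)\;\cong\;\bigoplus_{x\in W}\Hom\bigl(\Gamma_{\ge x/>x}A^i,\;\Gamma_{\le x/<x}B\bigr)(-2\ell(x)).
\]
The subquotient $\Gamma_{\ge x/>x}$ is an endofunctor of $\BMod{R}{R}$, and Soergel's isomorphism is natural in the first variable, so these termwise identifications are compatible with the differentials and assemble into an isomorphism of complexes
\[
\Hom^\bullet(A,B)\;\cong\;\bigoplus_{x\in W}\Hom^\bullet\bigl(\Gamma_{\ge x/>x}A,\;\Gamma_{\le x/<x}B\bigr)(-2\ell(x)).
\]
By $\Delta$-exactness, each $\Gamma_{\ge x/>x}A$ is null-homotopic, so applying $\Hom^\bullet(-,\Gamma_{\le x/<x}B)$ makes each summand null-homotopic, and hence so is $\Hom^\bullet(A,B)$. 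Taking $H^0$ gives $\Hom_K(A,B)=0$.

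Part (2) is dual: invoking the variant of \eqref{eq:SH} valid when $B\in\mathcal{B}$ sits in the first slot and $A^i\in\mathcal{F}_\nabla$ in the second, and using naturality in the second argument, one obtains
\[
\Hom^\bullet(B,A)\;\cong\;\bigoplus_{x\in W}\Hom^\bullet\bigl(\Gamma_{\ge x/>x}B,\;\Gamma_{\le x/<x}A\bigr)(-2\ell(x)).
\]
Now $\nabla$-exactness of $A$ makes each $\Gamma_{\le x/<x}A$ null-homotopic, so the right-hand side is null-homotopic and $\Hom_K(B,A)=0$.

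The one point that requires care is the naturality of Soergel's Hom formula in the relevant variable; this is not quite spelled out in \cite[Theorem 5.15]{So3} but is built into its construction, which uses the natural projection $\Gamma_{\ge x}M\twoheadrightarrow\Gamma_{\ge x/>x}M$ (in the first variable) or the inclusion $\Gamma_{\le x}N\hookrightarrow N$ (in the second), combined with the $\Hom$-vanishing between graded shifts of $R_x$ and $R_y$ for $x\ne y$ that pins down the direct-sum decomposition canonically. Once naturality is granted, the argument is entirely formal and the only substantive input is Soergel's Hom formula itself.
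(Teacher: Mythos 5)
Your route is genuinely different from the paper's, but as written it has a gap at the crucial step.

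The gap is the assertion that Soergel's Hom formula \eqref{eq:SH} gives a \emph{natural} direct-sum decomposition, so that the termwise isomorphisms assemble into an isomorphism of complexes. What \cite[Theorem 5.15]{So3} actually produces is a filtration of $\Hom(M,N)$ indexed by $W$ whose associated graded is $\bigoplus_{x}\Hom(\Gamma_{\ge x/>x}M,\Gamma_{\le x/<x}N)(-2\ell(x))$; this filtration splits because each piece is a graded free right $R$-module, but the splitting involves choices and is not canonical. Your stated justification --- that $\Hom(R_x,R_y)=0$ for $x\ne y$ pins down the decomposition --- does not apply: as right $R$-modules every subquotient is a direct sum of shifts of $R$, and $\Hom(R(a),R(b))\ne 0$ whenever $a\ge b$, so nontrivial extensions between pieces at different $x$'s can and do occur. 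Consequently a differential $d\colon A^{i-1}\to A^i$ need not be block-diagonal with respect to chosen splittings, and the claimed complex-level decomposition of $\Hom^\bullet(A,B)$ is unjustified.

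The idea can be rescued, but only after weakening it: one should work with the natural \emph{filtration} rather than a decomposition, and with \emph{acyclicity} rather than null-homotopy. If one verifies that the filtration on $\Hom(M,B)$ from \cite{So3} is functorial in $M\in\mathcal{F}_\Delta$ (this is plausible, since it is cut out by support conditions preserved by any bimodule map, but it is not stated in the reference and needs an argument), then $\Hom^\bullet(A,B)$ acquires a finite filtration of complexes whose subquotients are $\Hom^\bullet(\Gamma_{\ge x/>x}A,\Gamma_{\le x/<x}B)(-2\ell(x))$. Each of these is null-homotopic, hence acyclic, because $\Gamma_{\ge x/>x}A$ is null-homotopic by $\Delta$-exactness and $\Hom^\bullet(-,\Gamma_{\le x/<x}B)$ is additive. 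A finite filtration with acyclic subquotients has acyclic total object, so $\Hom_K(A,B)=H^0(\Hom^\bullet(A,B))=0$.

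By contrast, the paper sidesteps the naturality question entirely: it uses \eqref{eq:SH} only as a \emph{numerical} (dimension-counting) statement to show that a three-term $\Delta$-exact sequence $0\to M^1\to M^2\to M^3\to 0$ stays short exact after applying $\Hom(-,B)$, and then runs an induction on the size of $\supp A$ using the short exact sequence of complexes $0\to\Gamma_{\ge x}A\to A\to A/\Gamma_{\ge x}A\to 0$ for $x$ maximal in $\supp A$. That argument requires no functoriality input from Soergel's theorem at all, and is therefore on firmer ground; yours, once the naturality of the filtration is supplied, is arguably more conceptual. I would encourage you to either cite a reference where the functoriality of the filtration is established, or include the verification.
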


\begin{proof}
  We only prove 1). Statement 2) is proved by a very similar argument.

Suppose first that $A$ is a $\Delta$-exact complex consisting of only three non-zero terms:
\[
0 \to M^1 \to M^2 \to M^3 \to 0.
\]
Let us apply $\Hom(-, B)$. Because $\Hom(-,B)$ is left exact we have an exact sequence
\begin{equation} \label{eq:lemseq}
0 \to \Hom(M^3, B) \to \Hom(M^2, B) \stackrel{r}{\to} \Hom(M^1, B).
\end{equation}
We claim that $r$ is in fact surjective. Let us consider \eqref{eq:lemseq} in each degree separately. So fix a degree $i \in \mathbb{Z}$. Certainly we have an exact sequence of finite dimensional vector spaces
\[
0 \to \Hom(M^3, B)_i \to \Hom(M^2, B)_i \stackrel{r_i}{\to} \Hom(M^1, B)_i.
\]
On the other hand, by the fact that our sequence is $\Delta$-exact we
have, for any $x \in W$,
\[
 \Gamma_{\ge x/>x} M^2 \cong  \Gamma_{\ge x/>x} M^1 \oplus  \Gamma_{\ge x/>x} M^3
\]
Soergel's Hom formula \eqref{eq:SH} gives:
\[
\dim \Hom(M^3, B)_i + \dim \Hom(M^1, B)_i = \dim \Hom(M^2, B)_i \;.
\]
We conclude that $r_i$ is surjective. Hence $r$ is surjective as claimed and indeed we have an exact sequence
\[
0 \to \Hom(M^3, B) \to \Hom(M^2, B) \to \Hom(M^1, B) \to 0.
\]

We now turn to the general case and argue by induction on the size of the set
\[
\supp A := \{ x \in W \; | \; \text{there exists an $i$ such that } \Gamma_{\ge x/>x} A^i \ne 0 \}.
\]
If $|\supp A| = 1$ then $A$ is homotopic to zero (as follows directly from the definition of $\Delta$-exactness) and the lemma holds in this case.

For the general case, fix $x \in \supp A$ which is maximal in the Bruhat order and consider the sequence of $\Delta$-exact complexes
\[
0 \to \Gamma_{\ge x} A \to A \to A / \Gamma_{\ge x} A \to 0.
\]
By the remarks following the definition of $\Delta$-exact each row and
column of this sequence is $\Delta$-exact.

By the 3-term case considered above, applying $\Hom(-,B)$ yields an exact sequence of complexes
\[
0 \to \Hom^\bullet(A / \Gamma_{\ge x} A,B) \to \Hom^\bullet(A,B) \to \Hom^\bullet(\Gamma_{\ge x} A,B) \to 0
\]
Now $\supp(\Gamma_{\ge x} A) = \{ x \}$ and $|\supp(A / \Gamma_{\ge x}
A)| < |\supp A|$. Hence we can apply induction to conclude that the
complexes $\Hom^\bullet(A / \Gamma_{\ge x} A,B)$ and $\Hom^\bullet(\Gamma_{\ge x}
A,B)$ are acyclic. Hence $\Hom^\bullet(A,B)$ is acyclic too, by the long exact
sequence of cohomology. Hence $\Hom_K(A,B) = H^0(\Hom^\bullet(A,B)) =
0$ as claimed.
\end{proof}

\begin{cor}\label{T}
If $F\in K^b(\mathcal{F}_{\Delta})$ is $\Delta$-exact and $G\in K^b(\mathcal{B})$ or $F\in K^b(\mathcal{B})$ and $G\in K^b(\mathcal{F}_{\nabla})$ is $\nabla$-exact, then we have $\mathrm{Hom}_K(F,G)=0$
\end{cor}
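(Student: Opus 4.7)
The plan is to deduce both vanishings from Proposition~\ref{prop:exact}, which handles the case of a single Soergel bimodule, by inducting on the number of nonzero terms of the complex of Soergel bimodules. I spell out the $\Delta$-exact case; the $\nabla$-exact case is the mirror argument. Fix $F \in K^b(\mathcal{F}_{\Delta})$ that is $\Delta$-exact and let $G \in K^b(\mathcal{B})$. A useful preliminary observation is that the cohomological shift $F[k]$ is again $\Delta$-exact for every $k \in \mathbb{Z}$, since each $F^i$ stays in $\mathcal{F}_{\Delta}$ after re-indexing and the subquotient functors $\Gamma_{\ge x/>x}$ commute with shifts of complexes.

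I then induct on $N(G) := |\{j : G^j \ne 0\}|$. The case $N(G) = 0$ is trivial. When $N(G) = 1$, write $G = B[-n]$ with $B \in \mathcal{B}$ in cohomological degree $n$; then $\Hom_K(F, G) = \Hom_K(F[n], B) = 0$ by Proposition~\ref{prop:exact}(1) applied to the $\Delta$-exact complex $F[n]$. For $N(G) \ge 2$, set $n = \min\{j : G^j \ne 0\}$ and use the stupid truncation triangle~\eqref{eq:weighttriangle}
\[
w_{\ge n+1} G \to G \to w_{<n+1} G \stackrel{[1]}{\to},
\]
whose right term $w_{<n+1} G = G^n[-n]$ is a single Soergel bimodule up to shift, and whose left term lies in $K^b(\mathcal{B})$ with $N(w_{\ge n+1} G) = N(G) - 1$. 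Applying $\Hom_K(F, -)$ and reading off the three consecutive terms
\[
\Hom_K(F, w_{\ge n+1} G) \to \Hom_K(F, G) \to \Hom_K(F, w_{<n+1} G)
\]
of the resulting long exact sequence, the left term vanishes by the inductive hypothesis and the right by the base case; hence the middle vanishes as well.

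The $\nabla$-exact case is completely symmetric: one inducts on the number of nonzero terms of $F \in K^b(\mathcal{B})$ and invokes Proposition~\ref{prop:exact}(2) at the base. All the real content sits in Proposition~\ref{prop:exact}; beyond it, the only points to verify are the shift invariance of $\Delta$- and $\nabla$-exactness and the closure of $K^b(\mathcal{B})$ under stupid truncation, both of which are immediate. So there is no serious obstacle to single out here—the corollary is a purely formal triangulated-category consequence of the proposition.
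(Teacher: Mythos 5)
Your argument is correct and follows essentially the same route as the paper: induct on the number of nonzero terms of $G \in K^b(\mathcal{B})$, split off a single-term complex via stupid truncation~\eqref{eq:weighttriangle}, and reduce to Proposition~\ref{prop:exact} through the long exact sequence. The only cosmetic difference is that the paper truncates at the maximal nonzero degree while you truncate at the minimal one, and you make explicit the shift-invariance of $\Delta$-exactness, which the paper leaves implicit when citing Proposition~\ref{prop:exact} for the one-term base case.
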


\begin{proof} We handle the case of $F \in
  K^b(\mathcal{F}_\Delta)$ and $G \in K^b(\mathcal{B})$. The dual case is analogous.

  We have seen in Proposition \ref{prop:exact} above that if $F$ is
  $\Delta$-exact then $\Hom_K(F,B) = 0$ for any Soergel bimodule $B
  \in \mathcal{B}$. We prove the proposition by induction on 
$\ell(G) := |\{ i \in \mathbb{Z} \; | \; G^i \ne 0 \}|$. The
  case $\ell(G) = 0$ is trivial and the case $\ell(G) = 1$ follows by
  the above proposition.

So fix $G \in K^b(\mathcal{B})$ and assume that we have proven the
lemma for all complexes of Soergel bimodules $G'$ with $\ell(G') <
\ell(G)$. Choose $i$ maximal with $G^i \ne 0$. We have a distinguished
triangle (see \eqref{eq:weighttriangle})
\begin{equation} \label{eq:dt}
w_{\ge i}G \to G \to w_{<i}G \stackrel{[1]}{\to}
\end{equation}
with $\ell(w_{\ge i}G) = 1$ and $\ell(w_{<i}G) =
\ell(G) -1$. As $\hom(F,-)$ is cohomological, so is $\Hom(F,-)$. 
If we apply $\Hom(F,-)$ to \eqref{eq:dt} then we have a long
exact sequence
\[
\dots \to \Hom(F, w_{\ge i}G) \to \Hom(F, G) \to \Hom(F, w_{< i}G) \to \dots
\]
and induction allows us to conclude that $\Hom(F, G) = 0$ as claimed.
\end{proof}

\subsection{Exactness properties of Rouquier complexes} \label{1.2}

In this section we prove the complexes $F_x$ (resp. $E_x$) for $x \in
W$ are almost $\Delta-$ (resp. $\nabla-$) exact. More precisely
the goal is to prove:

\begin{prop} \label{prop:almostsplit}
  For $x, y \in W$ we have isomorphisms in the homotopy category
\[
\Gamma_{\le y/ < y} E_x \cong \begin{cases} R_x(\ell(x)) & \text{if $y =
    x$,}\\
0 & \text{otherwise.} \end{cases}
\]
Similarly,
\[
\Gamma_{\ge y/>y} F_x \cong \begin{cases} R_x(-\ell(x)) & \text{if $y =
    x$,}\\
0 & \text{otherwise.} \end{cases}
\]
\end{prop}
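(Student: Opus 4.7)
The plan is to prove the statement for $F_x$ by induction on $\ell(x)$; the case of $E_x$ follows by an entirely analogous argument with $\nabla$-filtrations in place of $\Delta$-filtrations (alternatively, one can invoke $E_x = F_{x^{-1}}^{-1}$ together with a duality swapping $\Gamma_{\ge y/>y}$ and $\Gamma_{\le y/<y}$). The base case $\ell(x)=0$ is immediate since $F_{id} = R$. For $\ell(x) = 1$, say $x=s$, the computation is essentially the one displayed in the example preceding the proposition: the hin-und-her Lemma applied to $B_s$ gives $\Gamma_{\ge s}B_s \cong R_s(-1)$ with quotient $R(1)$, so that $\Gamma_{\ge s/>s}F_s$ is the one-term complex $R_s(-1)$ in cohomological degree zero, while $\Gamma_{\ge id/>id}F_s \cong (R(1) \xrightarrow{\sim} R(1))$ is contractible. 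For $y \notin \{id, s\}$ the support bound \eqref{BSsupport} makes $\Gamma_{\ge y/>y}F_s$ vanish term-by-term.

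For the inductive step I would fix a reduced expression $x = x's$ with $\ell(x')=\ell(x)-1$, so that $F_x = F_{x'}F_s$. The key technical input is a description of how the subquotient functor $\Gamma_{\ge y/>y}$ interacts with tensoring on the right by $F_s$. Using the two-step $\Delta$-filtration of $B_s$ (with subquotients $R_s(-1)$ and $R(1)$) together with the general principle that tensoring on the right with $R_s$ sends the Bruhat stratum $\Gr(w)$ to $\Gr(ws)$ while tensoring with $R$ preserves strata, one obtains, for any $M \in \mathcal{F}_{\Delta}$, a short exact sequence of complexes that computes $\Gamma_{\ge y/>y}(MF_s)$ in terms of $\Gamma_{\ge y/>y}M$ and $\Gamma_{\ge ys/>ys}M$ (up to graded and cohomological shifts), with a connecting map whose precise form depends on whether $\ell(ys) > \ell(y)$ or $\ell(ys) < \ell(y)$.

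Feeding this description into the inductive hypothesis (which says $\Gamma_{\ge y'/>y'}F_{x'}$ is homotopy equivalent to $R_{x'}(-\ell(x'))$ when $y'=x'$ and to $0$ otherwise), the only values of $y$ for which $\Gamma_{\ge y/>y}F_x$ can fail to be contractible are $y = x'$ and $y = x's = x$. A case analysis exploits the reducedness of $x = x's$, which forces $\ell(x's) > \ell(x')$, so that at $y = x'$ the connecting differential becomes an isomorphism between shifted copies of $R_{x'}$, while at $y = x$ (where $\ell(ys) = \ell(x') < \ell(x)$) the connecting differential is forced to be zero. This shows that the $y = x'$ subquotient is contractible and the $y = x$ subquotient collapses to the one-term complex $R_x(-\ell(x))$ in the appropriate cohomological degree.

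The main obstacle is the explicit analysis of $\Gamma_{\ge y/>y}(MF_s)$ and the verification that the connecting differentials behave as just claimed: one must check that in the $y = x'$ case the differential is a genuine isomorphism (yielding true contractibility) while in the $y = x$ case it vanishes (yielding a true one-term complex, not merely a quasi-isomorphism). The length-theoretic dichotomy $\ell(ys) > \ell(y)$ versus $\ell(ys) < \ell(y)$, applied respectively at $y = x'$ and $y = x$, is precisely what allows this clean separation; once established, the proposition follows.
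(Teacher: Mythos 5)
Your high-level strategy — induction on $\ell(x)$, base cases $\ell(x)\in\{0,1\}$ by hand, and the inductive step via $F_x = F_{x'}F_s$ with $\ell(x's) > \ell(x')$ — is exactly the paper's (the paper works symmetrically with $E_x = E_{xs}E_s$, proving the $\nabla$-side first). But the step you yourself flag as ``the main obstacle'' is precisely where the paper's work lies, and your proposed resolution does not yet constitute a proof.

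Concretely: you assert that the two-step $\Delta$-filtration $0 \to R_s(-1) \to B_s \to R(1) \to 0$, together with the fact that $(-)\otimes_R R_s$ shifts strata by $s$, yields ``for any $M \in \mathcal{F}_{\Delta}$, a short exact sequence of complexes that computes $\Gamma_{\ge y/>y}(MF_s)$ in terms of $\Gamma_{\ge y/>y}M$ and $\Gamma_{\ge ys/>ys}M$.'' This is the crux, and it is not a formal consequence of what you've said. The support filtration $\Gamma_{\ge ?}$ does not automatically commute with tensoring by $B_s$: the Bruhat order is not stable under right multiplication by $s$, and one cannot conclude that the stratification of $MB_s$ is the pushforward of the stratification of $M$ through the filtration of $B_s$ without additional input. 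The paper gets around this by (i) choosing an enumeration $w_0, w_1, \dots$ of $W$ compatible with Bruhat order \emph{and} refined from an enumeration of $W/\langle s\rangle$, so that $\{w_m, w_{m+1}\}$ is an $\langle s\rangle$-coset for even $m$, and then invoking \cite[Prop.~6.5]{Wi} to obtain the genuine isomorphism $(\Gamma_{\le m+1/<m}B)B_s \xrightarrow{\sim} \Gamma_{\le m+1/<m}(BB_s)$ at the \emph{coset} level; and (ii) for $y \notin \{x,xs\}$, invoking Soergel's splitting result \cite[Lemma~5.8]{So3} to show the coset-level filtration of $\Gamma_{\le p+1/<p}E_{xs}$ splits after restriction to $\BMod{R}{R^s}$, so that tensoring with $B_s$ over $R$ produces a degreewise-split short exact sequence of complexes and hence a distinguished triangle — sidestepping any analysis of a connecting map. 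Your proposal instead relies on showing the connecting differential of a putative triangle is an isomorphism at $y=x'$ and zero at $y=x$, but you supply no argument for either claim, and in the paper's approach these claims simply do not arise: the $y\in\{x,xs\}$ case is a direct two-term computation of $\Gamma_{\le n+1/<n}(E_x)$, and the $y\notin\{x,xs\}$ case uses the split triangle plus the inductive hypothesis. To turn your sketch into a proof you would need to supply the analogues of the two cited results; without them the ``general principle'' is not a principle but the theorem you are trying to prove.
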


\begin{proof}We prove the first isomorphism, the second follows by a
  dual argument.
We now prove this isomorphism by induction on $\ell(x)$. It can be
verified by hand for $\ell(x) = 0, 1$. So fix $x$ and choose $s \in S$
with $xs < x$ so that $E_x = E_{xs}E_s$.
 By induction we may assume that the first 
isomorphism in the proposition holds for $E_{xs}$. Our aim is to show
that it also holds for $E_x$.

Let us choose an enumeration $w_0, w_1, w_2, \dots$ of the elements of
$W$ compatible with the Bruhat order and such that $w_{m+1} = w_m{s}$
for all even $m$. Such an enumeration can be constructed by first
choosing an enumeration of $W/\langle s \rangle$ compatible with the
Bruhat order and then refining it to an enumeration of $W$.
It follows from 
\cite[Prop. 6.5]{Wi} that for any Soergel bimodule $B$ the
natural map gives an isomorphism:
\[
(\Gamma_{\le m+1/ < m} B)B_s \stackrel{\sim}{\longrightarrow}
(\Gamma_{\le m+1/<m} BB_s) \quad \text{for $m$ even.}
\]
(We use the same notation as in the proof of
Lemma \ref{nablaproj}.) Hence for any complex $F \in K^b(\mathcal{B})$
of Soergel bimodules we have an isomorphism:
\begin{equation} \label{eq:supportsub}
(\Gamma_{\le m+1/ < m} F)E_s \stackrel{\sim}{\longrightarrow}
(\Gamma_{\le m+1/<m} FE_s) \quad \text{for $m$ even.}
\end{equation}

We first deal with the case of $y \in \{ x, xs\}$. Fix $n$ such
that $w_{n+1} = x$. Then $n$ is necessarily even and $w_n = xs$.
If we apply induction and \eqref{eq:supportsub} we obtain
\[
\Gamma_{\le n+1/<n} (E_{x}) \cong \Gamma_{\le n+1/<n} (E_{xs})E_s
\cong \quad \dots \to 
R_{xs}(\ell(x)-2) \to R_{x,xs}(\ell(x)) \to \dots
\]
where $R_{x,xs}(\ell(x))$ occurs in complex degree 0 and all
terms which are not displayed are zero. Indeed, by induction we have
$\Gamma_{\le n+1/<n} (E_{xs}) \cong R_{xs}(\ell(xs))$ and $R_{xs}B_s \cong
R_{xs,x}(1)$.

The result in this case then follows by applying $\Gamma_{\le n+1/<n+1}$ and
$\Gamma_{\le n/<n}$ and using the isomorphisms of functors on
$K^b(\mathcal{F}_\nabla)$ (valid for any $i$):
\begin{align}
\Gamma_{\le i / <
  i}(-) & \cong \Gamma_{\le i}(\Gamma_{\le i+1/<i} (- )), \label{func1}\\
 \Gamma_{\le i+1/<i+1}(-) & \cong \Gamma_{\le i+1/<i+1}(\Gamma_{\le i+1/<i}(-)). \label{func2}
\end{align}
(Both of these isomorphisms follow from Lemma \ref{lem:hinundher2}.)

We now deal with the case $y \notin \{ x, xs \}$. We may assume $ys <
y$ and fix $p$ (again even) such that $y_p = ys$ and
$y_{p+1} = y$. As above we have
\[
\Gamma_{\le p+1/<p} (E_{x}) = \Gamma_{\le p+1/<p} (E_{xs}E_s)\cong
\Gamma_{\le p+1/<p} (E_{xs})E_s.
\]
Consider the exact sequence of complexes of bimodules
\[
0 \to \Gamma_{\le p/<p}E_{xs}\to \Gamma_{\le p+1/<p}E_{xs}\to \Gamma_{\le
  p+1/<p+1}E_{xs}\to 0.
\]
Each term on the left (resp. right) is isomorphic to a direct sum of
shifts of $R_{ys}$ (resp. $R_{y}$). By \cite[Lemma 5.8]{So3} any such
extension splits when we restrict to $\BMod{R}{R^s}$. Hence we have an
exact sequence of complexes
\[
0 \to (\Gamma_{\le p/<p}E_{xs})B_s \to (\Gamma_{\le p+1/<p}E_{xs})B_s \to (\Gamma_{\le
  p+1/<p+1}E_{xs})B_s \to 0
\]
where each row is split exact. Hence we have an exact triangle in
$K^b(\BMod{R}{R})$ 
\[
(\Gamma_{\le p/<p}E_{xs})B_s \to \Gamma_{\le p+1/<p}(E_{xs} B_s) \to (\Gamma_{\le
  p+1/<p+1}E_{xs})B_s \triright.
\]
By induction the left and right hand terms are 
homotopic to zero. We
conclude that $\Gamma_{\le p+1/<p}(E_{xs}B_s)$ is also homotopic to zero and hence
\[
\Gamma_{\le p+1/<p+1}(E_{xs}B_s) \cong \Gamma_{\le p/<p}(E_{xs}B_s) \cong 0
\]
again using \eqref{func1} and \eqref{func2}.

Finally, we have a distinguished triangle
\[
E_{xs}(-1) \to E_{xs}B_s \to E_{xs}E_s \triright
\]
and applying $\Gamma_{\le p+1/<p+1}(-)$ and $\Gamma_{\le p/<p}(-)$ the
two left hand terms are zero. Hence
\[
\Gamma_{\le p+1/<p+1}(E_{x})  \cong \Gamma_{\le p/<p}( E_{x}) \cong 0
\]
which is what we wanted to show.
\end{proof}

\subsection{Augmented Rouquier complexes}\label{2.2}

Given any braid $\sigma = s_1^{m_1}s_2^{m_2} \dots s_n^{m_n} \in B_W$
let $\epsilon(\sigma) = \sum_{i = 1}^n m_i$.

The following is standard:

\begin{lem} Let $\sigma \in B_W$ and let $w$ denote its image in $W$.
We have
\[
H^i(F_\sigma) = \begin{cases} R_w(-\epsilon(\sigma)) & \text{if $i = 0$,} \\
0 & \text{otherwise.} \end{cases}
\]
\end{lem}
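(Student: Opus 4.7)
My plan is induction on the length $n$ of a word $s_1^{\epsilon_1}\cdots s_n^{\epsilon_n}$ representing $\sigma$ (with each $\epsilon_j\in\{\pm 1\}$). Since $F_\sigma$ is well-defined up to isomorphism in $K$ by the theorem of Rouquier recalled in Section \ref{explicit}, it suffices to compute the cohomology for any one representative word; both $w\in W$ and $\epsilon(\sigma)\in\ZZ$ are invariants of $\sigma$. The base case $n\leq 1$ uses two standard short exact sequences of $R$-bimodules:
\[
0 \to R_s(-1) \to B_s \xrightarrow{m_s} R(1) \to 0
\qquad\text{and}\qquad
0 \to R(-1) \xrightarrow{\eta_s} B_s \to R_s(1) \to 0
\]
(a $\Delta$-type and a $\nabla$-type filtration of $B_s$, in the language of Section \ref{ca}). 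From these one reads off directly that $H^0(F_s^\epsilon) = R_s(-\epsilon)$ and $H^i(F_s^\epsilon) = 0$ otherwise, matching the desired formula with $w=s$ and $\epsilon(\sigma)=\epsilon$.

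For the inductive step, write $\sigma = \sigma' \cdot s^\epsilon$, so that $w = w's$ in $W$ and $\epsilon(\sigma) = \epsilon(\sigma') + \epsilon$. The sequences above also furnish an explicit chain map realizing a quasi-isomorphism between $F_s^\epsilon$ and $R_s(-\epsilon)$, viewed as a complex concentrated in degree zero: for $\epsilon = +1$, the kernel inclusion $R_s(-1) \hookrightarrow B_s$ into the degree-zero term of $F_s$; for $\epsilon = -1$, the cokernel projection from the degree-zero term $B_s$ of $F_s^{-1}$ onto $R_s(1)$. Every term of $F_{\sigma'}$ is a Soergel bimodule and hence flat as both a left and a right $R$-module, so tensoring the quasi-isomorphism above with $F_{\sigma'}$ produces a quasi-isomorphism
\[
F_\sigma = F_{\sigma'}F_s^\epsilon \; \simeq \; F_{\sigma'}R_s(-\epsilon).
\]
Flatness of $R_s$ as a left $R$-module then gives $H^i(F_\sigma) \cong H^i(F_{\sigma'}) \otimes_R R_s(-\epsilon)$, and combining this with the inductive hypothesis and the bimodule identity $R_{w'}\otimes_R R_s \cong R_{w's}$ yields $H^0(F_\sigma) \cong R_w(-\epsilon(\sigma))$ and vanishing in all other degrees.

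The main obstacle is the preservation of the quasi-isomorphism under tensor product with $F_{\sigma'}$: the mapping cone of the chain map between $R_s(-\epsilon)$ and $F_s^\epsilon$ is acyclic by direct inspection of the short exact sequences above, and one needs this acyclicity to persist after tensoring termwise with the flat bimodules comprising $F_{\sigma'}$. This is a standard flatness argument; the remainder of the proof is bookkeeping with grading shifts.
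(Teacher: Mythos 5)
Your proof is correct and takes essentially the same approach as the paper: induction on word length, using the flatness of Soergel bimodules (and of $R_s$, $B_s$) as one-sided $R$-modules to push cohomology through the tensor product. The only cosmetic difference is the order of operations — you first replace $F_s^{\epsilon}$ by its cohomology $R_s(-\epsilon)$ via an explicit quasi-isomorphism whose cone stays acyclic after tensoring with $F_{\sigma'}$, whereas the paper first collapses $F_{\sigma'}$ to $R_{w'}(-\epsilon(\sigma'))$ and then computes the resulting two-term complex directly.
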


\begin{proof}
It is well-known that Soergel bimodules are free when regarded as left or right $R$-modules. (Any Bott-Samelson bimodule $B_sB_t \dots B_u$ is free as a left or right $R$-bimodule and hence so is any direct summand.) It follows that the functor of tensoring on the right or left by a Soergel bimodule is exact.

  We prove the lemma
by induction on the length $\ell(\sigma)$ of a minimal word for
$\sigma$, with the cases $\ell(\sigma) = 0, 1$ following by direct
calculation from the definition of $F_\sigma$. First assume $\sigma = \sigma' s$ for some $\sigma' \in
B_W$ with $\ell(\sigma') = \ell(\sigma) - 1$. The cohomology of
$F_\sigma$ is the cohomology of the double complex $F_{\sigma'} B_s
\to F_{\sigma'} R(1)$. However, by induction and the above remarks, we
get that the cohomology of $F_\sigma$ is the cohomology of the complex
$R_{w} B_s(-\epsilon(\sigma')) \to R_{w} R
(-\epsilon(\sigma')+1)$. Now direct calculation shows that this
complex is quasi-isomorphic to a complex with only non-zero term
$R_{ws}[-\epsilon(\sigma)-1]$ in degree zero, which establishes the
induction step in this case.

The case when $\sigma = \sigma's^{-1}$  for some $s \in \mathcal{S}$ is handled in a similar manner.
\end{proof}

Now fix $w \in W$. The above lemma shows that the cohomology of $F_w$ (resp. $E_w$) is concentrated in degree zero, where it is isomorphic to $R_w(-\ell(w))$ (resp. $R_w(\ell(w))$). From the definitions it is clear that the non-zero terms of the complex $F_w$ (resp. $E_w$) are in degrees $\ge 0$ (resp. $\le 0$). Hence we have truncation morphisms
\[
f_w:R_w(-\ell(w)) = H^0(F_w) = \tau_{\leq 0}F_w\rightarrow F_w
\]
and
\[
e_w: E_w \to \tau_{\ge 0} E_w = H^0(E_w) = R_w(\ell(w)).
\]
We set
\begin{gather}
  \widetilde{F_w} := \cone(f_w) \\
\widetilde{E_w} := \cone(e_w).
\end{gather}
We call these complexes \textit{augmented Rouquier complexes}.

As $F_w$ and $E_w$ only have cohomology in degree zero, the augmented
Rouquier complexes $\widetilde{F_w}$ and $\widetilde{E_w}$ are
exact. In fact, they are exact in a much stronger sense. By
Proposition \ref{prop:almostsplit} we have:

\begin{cor}\label{h}
The complex 
$\widetilde{F_{w}}$ is $\Delta$-exact
and the complex $\widetilde{E_{w}}$ is $\nabla$-exact.
\end{cor}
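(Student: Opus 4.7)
The plan is as follows. Since $\nabla$-exactness of $\widetilde{E_w}$ is entirely dual, I would focus on showing that $\widetilde{F_w}$ is $\Delta$-exact. First I would verify that every term of $\widetilde{F_w}$ lies in $\mathcal{F}_\Delta$: the Soergel bimodule terms $F_w^i$ belong to $\mathcal{F}_\Delta$ by the theorem of Soergel recalled in Section \ref{ca}, while the extra term $R_w(-\ell(w))$ appearing in cohomological degree $-1$ of the cone is visibly $\Delta$-filtered (it consists of a single shift of $R_w$).

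Next, for a fixed $y \in W$, I would apply $\Gamma_{\ge y/>y}$ termwise to the cone defining $\widetilde{F_w}$. Since cones are formed termwise and $\Gamma_{\ge y/>y}$ is a functor on $\BMod{R}{R}$, this yields a canonical identification
\[
\Gamma_{\ge y/>y}\widetilde{F_w} \cong \cone(\Gamma_{\ge y/>y} f_w).
\]
I would then split into two cases. If $y \ne w$, then $\Gamma_{\ge y/>y} R_w(-\ell(w)) = 0$ because $R_w$ has support $\Gr(w)$, which is disjoint from $\Gr(y)$ in $V\times V$; moreover Proposition \ref{prop:almostsplit} gives $\Gamma_{\ge y/>y} F_w \cong 0$ in $K$. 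The cone of the zero map from the zero complex into a null-homotopic complex is itself null-homotopic.

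The more delicate case is $y = w$. Here $\Gamma_{\ge w/>w} R_w(-\ell(w)) = R_w(-\ell(w))$, while Proposition \ref{prop:almostsplit} gives $\Gamma_{\ge w/>w} F_w \cong R_w(-\ell(w))$ in the homotopy category, concentrated in cohomological degree zero. I would argue that $\Gamma_{\ge w/>w} f_w$ is a homotopy equivalence. Using that $H^0(F_w) = R_w(-\ell(w))$ is already entirely supported on $\Gr(w)$, the inclusion $R_w(-\ell(w)) = \ker(d^0) \hookrightarrow F_w^0$ realizing $f_w$ persists intact under $\Gamma_{\ge w/>w}$ and identifies $R_w(-\ell(w))$ with the zeroth cohomology of $\Gamma_{\ge w/>w} F_w$. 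Consequently $\Gamma_{\ge w/>w} f_w$ is a quasi-isomorphism, and by remark (5) following the definition of $\Delta$-exactness a quasi-isomorphism between bounded complexes whose terms are direct sums of shifts of $R_w$ is automatically a homotopy equivalence. Its cone is therefore null-homotopic, completing the verification.

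The main obstacle I anticipate is precisely the identification made in the case $y = w$: checking that the map induced on $H^0$ by applying $\Gamma_{\ge w/>w}$ to the canonical embedding $f_w$ really does match the isomorphism supplied by Proposition \ref{prop:almostsplit}. Everything else reduces to routine bookkeeping with that proposition together with the elementary support calculation for $R_w$.
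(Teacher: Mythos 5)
Your proposal is correct and takes essentially the same route as the paper, which records the corollary as an immediate consequence of Proposition~\ref{prop:almostsplit} without further elaboration. Your write-up simply makes explicit the routine bookkeeping (cones commute with the termwise functor $\Gamma_{\ge y/>y}$, the $y\ne w$ case is trivial, and in the $y=w$ case the induced map is a quasi-isomorphism between complexes of shifts of $R_w$, hence a homotopy equivalence by remark~(5)); the one place you flag as delicate is handled cleanly by noting that $\Gamma_{>w}F_w^i=0$ since $\supp F_w^i\subseteq\Gr(\le w)$, so $\Gamma_{\ge w/>w}F_w=\Gamma_{\ge w}F_w$ is an honest subcomplex and $H^0$ of it is exactly $\ker d^0\cap\Gamma_{\ge w}F_w^0=R_w(-\ell(w))$.
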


Combining this result with Corollary \ref{T} yields

\begin{cor}\label{three} For $w,v\in W$ and $H\in K^b(\mathcal{B})$
  and $m \in \mathbb{Z}$ we have
$$\mathrm{Hom}_K(H, \widetilde{E_{v}}[m])=0=\mathrm{Hom}_K(\widetilde{F_{w}}, H[m]).$$
\end{cor}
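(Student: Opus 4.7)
The plan is to observe that this corollary is essentially the immediate combination of Corollary \ref{h} and Corollary \ref{T}, once one checks that the hypotheses of the latter genuinely apply to the augmented Rouquier complexes and that shifting by $[m]$ is harmless.

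First I would verify that $\widetilde{F_w} \in K^b(\mathcal{F}_\Delta)$ and $\widetilde{E_v} \in K^b(\mathcal{F}_\nabla)$, since Corollary \ref{T} requires this. The complex $\widetilde{F_w} = \cone(f_w)$ is built term-by-term from the terms of $F_w$ (which are Soergel bimodules, known to lie in $\mathcal{F}_\Delta$ by \cite{So3} as recalled in Section \ref{ca}) together with a single copy of $R_w(-\ell(w))$ in some degree. A quick direct check using the support of $R_w$ shows that $\Gamma_{\ge x/>x}R_w$ vanishes for $x \ne w$ and equals $R_w$ for $x=w$, so $R_w \in \mathcal{F}_\Delta$; the dual observation gives $R_w \in \mathcal{F}_\nabla$. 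Thus $\widetilde{F_w} \in K^b(\mathcal{F}_\Delta)$ and $\widetilde{E_v} \in K^b(\mathcal{F}_\nabla)$.

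Next I would invoke Corollary \ref{h}, which asserts that $\widetilde{F_w}$ is $\Delta$-exact and $\widetilde{E_v}$ is $\nabla$-exact. Note that translating a complex does not disturb these properties: if $A$ is $\Delta$-exact then $\Gamma_{\ge x/>x}(A[m])$ is just the shift of $\Gamma_{\ge x/>x}A$, which remains homotopic to zero, and analogously in the $\nabla$-case. Similarly $H[m] \in K^b(\mathcal{B})$ whenever $H$ does. Therefore, applying Corollary \ref{T} to the pair $(\widetilde{F_w}, H[m])$ yields $\Hom_K(\widetilde{F_w}, H[m]) = 0$, and applying it to the pair $(H, \widetilde{E_v}[m])$ yields $\Hom_K(H, \widetilde{E_v}[m]) = 0$.

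There is no real obstacle at this stage—all the substantive work has already been carried out in Proposition \ref{prop:exact} (which powers Corollary \ref{T}) and Proposition \ref{prop:almostsplit} (which powers Corollary \ref{h}). The only thing worth emphasising is that the statement holds for \emph{every} $H \in K^b(\mathcal{B})$ and every $m \in \mathbb{Z}$, precisely because the $\Delta$- and $\nabla$-exactness of $\widetilde{F_w}$ and $\widetilde{E_v}$ are statements about their internal support filtrations and are insensitive to the complex $H$ one pairs them with.
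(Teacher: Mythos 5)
Your proof is correct and takes exactly the same route as the paper, which simply states that Corollary \ref{three} follows by combining Corollary \ref{h} with Corollary \ref{T}. You flesh out the (genuinely worth checking) hypothesis that the augmented Rouquier complexes lie in $K^b(\mathcal{F}_\Delta)$ and $K^b(\mathcal{F}_\nabla)$, and note that the shift $[m]$ is harmless, but this is standard bookkeeping rather than a different argument.
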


\subsection{Proof of the main theorem}\label{2.3}
In this final section we will see how $\Delta$-exactness and
$\nabla$-exactness enters the story of the 2-braid group.

\begin{proof}[Proof of Theorem \ref{Princ}]
We have the distinguished triangles in the triangulated category
$K=K^b(\BMod{R}{R})$:
\begin{equation}\label{tri1}
R_w(-\ell(w))\rightarrow F_{w}\rightarrow \widetilde{F_{w}} \stackrel{[1]}{\longto}
\end{equation}
and for any integer $m$:
\begin{equation}\label{tri2}
E_{v}[m]\rightarrow R_{v}(\ell(v))[m]\rightarrow \widetilde{E_{v}}[m]\stackrel{[1]}{\longto}
\end{equation}
By applying the cohomological functor $\Hom_K(-,E_{v}[m])$ to
the triangle \eqref{tri1} we obtain, using Corollary \ref{three}:
\begin{equation}\label{a} \Hom_K(F_{w},E_{v}[m])\cong
  \Hom_K(R_w(-\ell(w)),E_{v}[m]).\end{equation}  
Simlarly, applying $\Hom_K(F_w, -)$ to \eqref{tri2} yields an
isomorphism
\begin{equation}\label{b} \Hom_K(F_{w},E_{v}[m])\cong
  \Hom_K(F_w,R_{v}(\ell(v))[m]).\end{equation} 

If $w = v$ then from \eqref{BSsupport} we have $\supp E_v^i \subset \Gr(<v)$ for $i <
0$ and $\supp E_v^0 = \Gr(\le v)$. Hence 
$\Hom(R_v(-\ell(w)), E_v^i) = 0$ for $i \ne 0$ and (as graded $R$-modules)
\[
\Hom(R_v(-\ell(w)),E_v^0) = \Hom(R_v(-\ell(v)), R_v(\ell(v))(-2\ell(v)) = R
\]
by Soergel's Hom formula \eqref{eq:SH}. Hence the complex
$\Hom^\bullet(R_v(-\ell(v)), E_v)$ is concentrated in degree zero, where it
is isomorphic to $R$. Hence, by \eqref{a},
\[
\Hom(F_v, E_v[m])  = \begin{cases} R & \text{if $m = 0$,} \\
0 & \text{otherwise.} \end{cases}
\]

Now assume that $w \ne v$. We will prove that
\[
\Hom_K(F_w, E_v[m]) = 0
\]
by considering two cases.

For the first case, assume that $w \not \le v$. Using \eqref{a} we
have to show that 
\[
\Hom_K(R_w(-\ell(w)), E_v[m]) = 0.
\]
For all $i \in \mathbb{Z}$, $E_v^i$ is a Soergel
bimodule and $\supp E_v^i \subseteq \Gr(\le v)$ (see \eqref{BSsupport}). It follows that
$E_v^i$ has a filtration with subquotients isomorphic to direct sums of shifts of
$R_x$ with $x \le v$. Now, using the fact that $\Hom(R_x, R_y) = 0$ if
$x \ne y$  we conclude that $\Hom(R_w, E_v^i) = 0$ (or alternatively
one may use Soergel's Hom formula). The desired
vanishing then follows.

For the second case, assume that  $w < v$ so that $v \not \le
w$. Using \eqref{b} we have to prove that
\[
\Hom_K(F_w, R_v(\ell(v))[m]) = 0.
\]
By a similar argument to that of the previous paragraph we see that
\[\Hom(F_w^i, R_v) = 0 \quad \text{for all $i$.}\]The theorem now follows.
\end{proof}

\end{document}